\theoremstyle{plain}
\newtheorem{thm}{Theorem}[section]
\newtheorem{lemma}[thm]{Lemma}
\newtheorem{cor}[thm]{Corollary}
\newtheorem{prop}[thm]{Proposition}
\theoremstyle{definition}
\newtheorem{remark}[thm]{Remark}
\def\mequal{\mathrel{\mathpalette\@mvereq{\hbox{\sevenrm m}}}}
\def\@mvereq#1#2{\lower.5\p@\vbox{\baselineskip\z@skip\lineskip1.5\p@
    \ialign{$\m@th#1\hfil##\hfil$\crcr#2\crcr=\crcr}}}
\def\partr#1#2{/\kern-.08333em/_{#1,#2}^{\phantom{.}}}
\def\invpartr#1#2{/\kern-.08333em/_{#1,#2}^{-1}}
\def\hpartr#1#2{/\kern-.08333em/_{#1,#2}^{h}}
\def\Epartr#1#2{/\kern-.08333em/_{#1,#2}^{E}}
\def\newdot{{\kern.8pt\cdot\kern.8pt}}
\def\,{\relax\ifmmode\mskip\thinmuskip\else\thinspace\fi}
\def\{{\relax\ifmmode\lbrace\else $\lbrace$\fi}
\def\}{\relax\ifmmode\rbrace\else $\rbrace$\fi}
\font\sevenrm=cmr7
\newcommand\NN{\mathbb{N}}
\newcommand\RR{\mathbb{R}}
\newcommand\PP{\mathbb{P}}
\newcommand{\SC}{{\mathscr C}}
\def\mathpal#1{\mathop{\mathchoice{\text{\rm #1}}%
   {\text{\rm #1}}{\text{\rm #1}}%
   {\text{\rm #1}}}\nolimits}
\def\grad{\mathpal{grad}}
\def\id{\mathpal{id}}
\def\grad{\mathop{\rm grad}\nolimits}
\def\di{\displaystyle}
\def\f{\frac}
\def\b{\beta }
\def\D{\Delta }
\def\d{\delta }
\def\e{\varepsilon }
\def\g{\gamma }
\def\n{\nabla }
\def\om{\omega }
\def\s{\sigma }
\begin{document}

\title[]{Means in complete manifolds: uniqueness and approximation}

\author[M. Arnaudon]{Marc Arnaudon} \address{Laboratoire de Math\'ematiques et
  Applications\hfill\break\indent CNRS: UMR 7348\hfill\break\indent
  Universit\'e de Poitiers, T\'el\'eport 2 - BP 30179\hfill\break\indent
  F--86962 Futuroscope Chasseneuil Cedex, France}
\email{marc.arnaudon@math.univ-poitiers.fr}
\author[L. Miclo]{Laurent Miclo} \address{Institut de Math\'ematique de Toulouse\hfill\break\indent CNRS: UMR 5219\hfill\break\indent
  118, route de Narbonne\hfill\break\indent
  F--31062 Toulouse Cedex 9, France}
\email{laurent.miclo@math.univ-toulouse.fr}


%
%

\begin{abstract}\noindent
Let $M$ be a complete Riemannian manifold,  $N\in \NN$ and $p\ge 1$. We prove that almost everywhere on  $x=(x_1,\ldots,x_N)\in M^N$ for Lebesgue measure in $M^N$,  the measure $\di \mu(x)=\f1N\sum_{k=1}^N\d_{x_k}$ has a unique $p$-mean $e_p(x)$.
As a consequence, if $X=(X_1,\ldots,X_N)$ is a $M^N$-valued random variable with absolutely continuous law, then almost surely $\mu(X(\om))$ has a unique $p$-mean. In particular if  $(X_n)_{n\ge 1}$ is an independent sample of an absolutely continuous law in $M$, then the process $e_{p,n}(\om)=e_p(X_1(\om),\ldots, X_n(\om))$ is well-defined.

Assume $M$ is compact and consider a probability measure $\nu$ in $M$.
Using partial simulated annealing, we define a continuous semimartingale which converges to the set of minimizers of the integral of distance at power~$p$ with respect to $\nu$. When the set is a singleton, it converges to the $p$-mean. 
\end{abstract}

\maketitle

%
%


\section{Introduction}\label{Section1}\setcounter{equation}0

Finding the mean of the median or more generaly the $p$-mean~$e_p$  of a probability measure in a manifold (the point which minimizes integral with respect to this measure of distance  at power $p$) has numerous applications. There is not much to say for the mean in $\RR^d$, almost the only case where there is a closed formula, and the most important case as the most useful estimator in statistics when the measure is uniform law on a sample. For medians in $\RR^d$ the situation is more complicated. Uniqueness holds as soon as the support of the probability measure is not carried by a line. The first algorithm for computing $e_1$ is due to Weisfeld in~\cite{Weiszfeld:37}. As for the computation of~$e_\infty$ (the center of the smallest ball containing the support of the measure), Bad\u oiu and Clarkson gave a fast and simple algorithm in~\cite{Badoiu-Clarkson:03}. For many applications in biology, signal processing, information geometry, extension to other spaces is necessary. The median in Hilbert space is computed in~\cite{Cardot-Cenac-Zitt:12}. In nonlinear spaces with convexity assumptions, uniqueness has been established in~\cite{Kendall:90} for the mean,  \cite{Afsari:10} for the $p$-mean. Many algorithms of computation now exist. As far as deterministic algorithms are concerned, one can cite~\cite{Le:04}, \cite{Groisser:05}, \cite{Groisser:06}, \cite{Afsari-Tron-Vidal:11} for the mean in Riemannian manifolds,~\cite{Arnaudon-Nielsen:12a} for the mean in Finsler manifolds,~\cite{Fletcher-al:09} and more generally~\cite{Yang:10} for the median,~\cite{Arnaudon-Nielsen:12} for~$e_\infty$. Stochastic algorithms avoid to compute the gradient of the functional to minimize. They can be found in~\cite{Sturm:05},~\cite{Arnaudon-al:12}. For other functionals to minimize, see~\cite{Bonnabel:11}.

In this paper we investigate the case of non necessarily convex, complete Riemannian manifolds. Our first result (Theorem~\ref{T1}) concerns uniqueness of the $p$-mean of the uniform measure on a finite set $\{x_1,\ldots,x_n\}$ of points, almost everywhere on $x=(x_1,\ldots,x_n)$ for the Lebesgue measure. This generalizes Bhattacharya and Patangreanu result on the circle (\cite{BP03}, case $p=2$). See also~\cite{Charlier:11} for more general uniqueness criterions on the circle.

For computation of the $p$-mean, usual deterministic algorithms are not possible any more, due to the fact that the functional to minimize may have many local minima. So restricting to symmetric spaces  we use a simulated annealing method with a continuous stochastic process, together with an estimation of the gradient to minimize via a drift moving faster and faster. With this method we are able to define a process which converges in distribution to the $p$-mean for $p\in[1,\infty)$ (Theorem~\ref{T3}, and Theorem~\ref{T2} for more general but smooth functionals). 

The main applications are in signal processing with polarimetric signal, but also for the group of rotations of $\RR^n$, so as to determine averages on rotations. Also this solves many problems of  optimization which may arise in economy, decision support, operation research.
Notice that on the circle, fast computation of the mean has been performed in~\cite{Hotz-Huckemann:11}. In fact this is a case where a closed formula can be found. For general case the situation is much more complicated and the convergence of our processes is slower and weaker. Jump processes and algorithms related to the continuous processes presented here will be investigated in a forthcoming paper.

\section{Uniqueness of $p$-means for uniform measures with finite support}\label{Section2}\setcounter{equation}0

Let $M$ be a $d$-dimensional complete Riemannian manifold with Riemannian distance denoted by $\rho$.
 For $\nu$ a probability measure on $M$ and $p\ge 1$, we define 
 \begin{equation}
\label{E1}
\begin{split}
H_{p,\nu} : M&\to \RR_+\cup\{+\infty\},\\
y& \mapsto \int_M\rho^p(y,z)\,\nu(dz).
\end{split}
\end{equation}
Either $H_{p,\nu}\equiv \infty$ or for all $y\in M$, $H_{p,\nu}(y)<\infty$. In the latter case we denote by $Q_{p,\nu}$ the set of minimizers of  $H_{p,\nu}$.  When $Q_{p,\nu}$ has only one element we denote it by $e_{p,\nu}$ and call it the $p$-mean of $\nu$. When there is no possible confusion we let $e_p=e_{p,\nu}$. 
For $x=(x_1,\ldots,x_N)\in M^N$, we let 
\begin{equation}
\label{E1bis}
\mu(x)=\f1N\sum_{k=1}^N\d_{x_k}.
\end{equation}
Clearly $H_{p,\mu(x)}$ is finite.

\begin{thm}
\label{T1}
Assume $p>1$ or  $\{$$d>1$ and $N>2$$\}$.
For almost all $x\in M^N$, $Q_{p,\mu(x)}$ has a unique element  $e_{p,\mu_(x)}$
\end{thm}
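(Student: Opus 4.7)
The plan is to use a parametric transversality / Sard argument. The observation driving the proof is that if $Q_{p,\mu(x)}$ contains two distinct points $y_1 \ne y_2$, then (wherever $H_{p,\mu(x)}$ is differentiable in $y$) both $y_1$ and $y_2$ are critical points of $H_{p,\mu(x)}$, and in addition $H_{p,\mu(x)}(y_1) = H_{p,\mu(x)}(y_2)$. The goal is to show the set of $x \in M^N$ admitting such a pair has Lebesgue measure zero.

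I would set up the map (in local coordinates, or equivalently as a section of an appropriate pullback bundle)
\[
\Phi(x, y_1, y_2) = \bigl(\nabla_{y_1} H_{p,\mu(x)}(y_1),\ \nabla_{y_2} H_{p,\mu(x)}(y_2),\ H_{p,\mu(x)}(y_1) - H_{p,\mu(x)}(y_2)\bigr),
\]
on the open set $U \subset M^N \times M \times M$ of configurations $(x, y_1, y_2)$ with $y_1 \ne y_2$, $y_i$ avoiding the cut loci $C(x_k)$ of all $x_k$, and $y_i \ne x_k$ for every $i,k$. If $0$ is a regular value of $\Phi$, then $\Phi^{-1}(0)$ is a smooth submanifold of $U$ of codimension $2d+1$, hence of dimension $Nd - 1$, so its projection to $M^N$ has Lebesgue measure zero. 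The complementary ``singular'' set of $x$ (minimizers landing on cut loci or on some $x_k$) is also negligible, by a Fubini-type argument using that the cut locus of each point of $M$ has measure zero.

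The crux is verifying that $D\Phi$ is surjective at every $(x,y_1,y_2) \in \Phi^{-1}(0)$. Two observations drive the computation: first, variations of $y_i$ act on the $i$-th component through the Hessian $\text{Hess}_{y_i} H_{p,\mu(x)}$ and leave the third component unchanged at first order since $\nabla_{y_i} H_{p,\mu(x)}(y_i) = 0$. Second, for each $k$, varying $x_k$ in the direction $\nabla_{x_k}[\rho^p(y_1, x_k) - \rho^p(y_2, x_k)]$ changes the third component; this direction is nonzero because $\nabla_{x_k}\rho^p(y_i, x_k)$ equals $-p\rho^{p-1}(y_i, x_k)$ times the unit initial velocity of the minimizing geodesic from $x_k$ to $y_i$, so equality of these vectors for $i = 1, 2$ would force $y_1 = y_2$. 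A cascaded correction then yields surjectivity: for $N \ge 3$, one has enough $x$-freedom in three distinct $x_k$'s to handle all $2d+1$ target directions; for $N = 2$, one must also use variations of $y_1, y_2$, which requires non-degeneracy of the Hessians $\text{Hess}_{y_i} H_{p,\mu(x)}$ at the minimizers --- available for $p > 1$ via the strict convexity of $r \mapsto r^p$. This case analysis reproduces exactly the hypothesis ``$p > 1$ or $\{d > 1$ and $N > 2\}$'' of the theorem.

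The hard part, I expect, is rigorously establishing surjectivity of $D\Phi$ \emph{everywhere} on $\Phi^{-1}(0)$, not merely on a generic subset: this requires an explicit computation of the mixed Hessian $D_{x_k}\nabla_{y_i}\rho^p(y_i, x_k)$ on a general Riemannian manifold, most naturally performed via Jacobi fields in normal coordinates, combined with careful control of the Hessian non-degeneracy in the $N = 2$ case and a delicate handling of points near the cut locus. Once $0$ is confirmed as a regular value of $\Phi$, the theorem follows by parametric Sard: for almost every $x \in M^N$, $0$ is a regular value of $\Phi_x := \Phi(x, \cdot, \cdot) : M^2 \to \RR^{2d+1}$; but $\dim M^2 = 2d < 2d+1$, so this forces $\Phi_x^{-1}(0) = \emptyset$, whence no pair of distinct critical points of $H_{p,\mu(x)}$ with equal values can exist, and $Q_{p,\mu(x)}$ is a singleton.
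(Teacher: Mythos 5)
Your overall strategy---a parametric Sard/transversality argument showing that the set of $x$ admitting two distinct critical points with equal $H$-values has codimension $\ge 1$---is in the same spirit as the paper's proof, but the step on which everything rests, surjectivity of $D\Phi$ along $\Phi^{-1}(0)$, is exactly the part you defer, and the two claims you offer in support of it do not hold. First, for $p=1$ the vector $\nabla_{x_k}\bigl[\rho(y_1,x_k)-\rho(y_2,x_k)\bigr]$ is the difference of two \emph{unit} vectors and vanishes whenever $x_k$, $y_1$, $y_2$ lie on a common minimal geodesic; equality of the two unit directions does not force $y_1=y_2$. Ruling this degeneracy out for at least one $k$ is precisely where the hypothesis $d>1$, $N>2$ enters in the paper (the set $C_0$ of aligned configurations, followed by a two-minimal-geodesics contradiction), and your sketch has no counterpart. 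Second, for $N=2$, $p>1$ you invoke non-degeneracy of $\mathrm{Hess}_{y_i}H_{p,\mu(x)}$ at the minimizers ``via strict convexity of $r\mapsto r^p$''; on a non-convex manifold curvature contributes negative terms to $\mathrm{Hess}\,\rho^p(\cdot,x_k)$ and the Hessian of the sum can be degenerate even at a global minimizer, so this would itself have to be established generically---a circular-looking extra genericity argument. The paper avoids Hessians altogether: Sard is applied to the map $\tilde\phi_p$ restricted to the critical variety $\tilde W_p$, after which the branches $y_i(x')$ are smooth and the gradient of the value function $x'\mapsto H_{p,\mu(x')}(y_i(x'))$ is computed by the envelope theorem (the $y$-derivative drops out), so only the first-order data $\bigl((r_k^i)^{p-1}m_k^i\bigr)_k$ is needed to separate branches. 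Also, your dimension count (``$N\ge 3$ suffices, $N=2$ needs the Hessian'') does not actually reproduce the theorem's hypothesis, which for $p=1$ requires $d>1$ as well.

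There is a second genuine gap in the reduction to the open set $U$. The Fubini argument you cite shows that $\{(x,y):y\in\bigcup_k C(x_k)\}$ is null in $M^N\times M$, but since the minimizers form a lower-dimensional (typically finite) set over each $x$, this gives no control on the set of $x$ whose \emph{minimizers} land on a cut locus; some other argument is needed, and the paper sidesteps the issue entirely by parametrizing critical points by $(y,n,r)$ with minimal geodesics, where $h_p(y,n,r)=\sum_k r_k^p$ is globally smooth and cut loci never intervene. More seriously, for $p=1$ the set of $x$ for which a minimizer coincides with one of the data points $x_k$ has \emph{positive} Lebesgue measure (the paper states this explicitly), so it cannot be discarded as negligible; at such points $\rho(\cdot,x_k)$ is not differentiable and your map $\Phi$ is not even defined. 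Handling these configurations is a substantial portion of the paper's proof (the cases $x_1'\in Q_{1,\mu(x')}$, using Yang's inequality $\|\sum_{k\ge 2}n_k^i(x')\|\le 1$ and the modified charts $W_1^i$), and your proposal is silent on them. For $p>1$ alone, with the surjectivity of $D_x\Phi$ actually carried out and the cut-locus exclusion repaired, your route could be made to work and would be a legitimate alternative packaging of the same idea; as written, it is not a proof.
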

\begin{remark}
\label{R1}
This theorem extends Theorem~4.15 in \cite{Yang:11} where the same result has been established for $p=1$ and $M$ compact. 
\end{remark}

\begin{proof}

We begin with the case $p>1$.

Since $\mu(x)$ has a finite support, we can assume that $M$ is a compact Riemannian manifold. For this a smooth modification outside a large ball is sufficient. For instance we can choose a radius so that the boundary is smooth, double the ball and finally smoothen the metric locally around the place where the pasting has been performed.

So in the sequel we will assume that $M$ is compact, with diameter $L$.
For $y\in M$ we denote by $S_yM\subset T_yM$ the set of unit tangent vectors above $y$. Let
\begin{equation}
\label{E3}
{\tilde V}=\left\{(y,n), \ y\in M, \ n=(n_1,\ldots, n_N), \ n_j\in S_yM, \ j=1,\ldots N\right\}\times [0,2L]^N.
\end{equation}
Note ${\tilde V}$ is a compact  smooth $(N+1)d$-dimensional  manifold with boundary.

 Define 
 \begin{equation}
 \label{E4}
 \begin{split}
 {\tilde \phi} : {\tilde V}&\to M^N\\
 (y,n,r)&\mapsto \left(\exp_y(n_1r_1),\ldots, \exp_y(n_Nr_N\right).
 \end{split}
 \end{equation}
  The map ${\tilde \phi}$ is onto. If $x=(x_1,\ldots,x_N)\in M^N$, consider $y\in M$ minimizing $H_{p,\mu(x)}$.  Then among all $(n,r)$ such that
\begin{equation}
\label{E6}
{\tilde \phi}(y,n,r)=x
\end{equation}
we can choose one so that
for all $k=1,\ldots,N$ the map $s\mapsto \exp_y(sn_k)$ is a minimal geodesic for $s\in [0,r_k]$.  
For this choice we have 
\begin{equation}
\label{E6bis}
 H_{p,\mu(x)}(y)=\f1N\sum_{k=1}^N r_k^p.
 \end{equation}
 Now since $y$ minimizes $H_{p,\mu(x)}$, from equation~\eqref{E6bis} at $y$ and variation of arc length formula, we have for all $u\in T_yM$
 \begin{equation}
 \label{E6quad}
  \left\langle\sum_{k=1}^Nr_k^{p-1}n_k,u\right\rangle\le 0
\end{equation}
and this implies
 \begin{equation}
 \label{E6ter}
  \sum_{k=1}^Nr_k^{p-1}n_k=0.
\end{equation}

So letting 
\begin{equation}
\label{E3bis}
\tilde W_p=\left\{(y,n,r)\in \tilde V,\ \sum_{k=1}^N r_k^{p-1}n_k=0\right\}
\end{equation}
and $\tilde\phi_p=\phi|_{\tilde W_p}$ the restriction of $\tilde \phi$ to $\tilde W_p$, $\tilde \phi_p$ is onto, on $M^N$ by~\eqref{E6} and~\eqref{E6ter}.

By Sard's theorem, the set $C_1\subset M^N$ of singular values of $\tilde \phi_p$ has measure $0$. It is closed since $\tilde W_p$ is compact.
 
Let us prove that the set
\begin{equation}
\label{E7}
C_2:=\left\{(x_1,\ldots,x_N)\in M^N,\ \{x_1,\ldots,x_N\}\cap Q_{p,\mu(x_1,\ldots,x_N)}\not=\emptyset\right\}
\end{equation}
has Lebesgue-measure~$0$: 
we can assume that for $i\not=j$, $x_i\not=x_j$ since we exclude $0$-measure sets.  So the elements we consider are images by $\tilde \phi_p$ of 
\begin{equation}
\label{E7bis}
\hat W_p=\left\{(y,n,r)\in \tilde W_p,\  r_1=0,\  \forall k\ge 2 \ r_k> 0\right\}.
\end{equation}
The set~$\hat W_p$ is a submanifold of codimension $1$ of $\tilde W_p$. Now $\dim {\tilde W_p}=Nd=\dim M^N$ so  $\dim {\hat W}_p=\dim M^N-1$ and its image by $\tilde \phi_p$    is of measure $0$ in $M^N$. As a conclusion, $C_2$ has measure~$0$.

Define 
\begin{equation}
 \label{E8}
C_3:=\left\{(x_1,\ldots,x_N)\in M^N,\  \exists i\not=j\  \hbox{s.t.}\  x_i=x_j\right\}
\end{equation}
and $C=C_1\cup C_2\cup C_3$. The set $C$ is closed in $M^N$ and has measure~$0$. Letting 
\begin{equation}
\label{E39}
W_p=\left\{(y,n,r)\in \tilde W_p, \ \forall k=1,\ldots N, r_k\in (0,2L)\right\},
\end{equation}
we proved that ${\tilde \phi}_p|_{W_p}$ is onto on $M^N\backslash C$. Denote $\phi_p={\tilde \phi_p}|_{W_p}$.
 Since   $W_p$ has same dimension as $M^N$ and $\tilde W_p$ is compact,  every point $x$ of $M^N\backslash C$ has a neighbourhood $V_x$ such that $\phi_p^{-1}(V_x)=U_{1,x}\cup\cdots\cup U_{m_x,x}$ where the $U_{j,x}$ are disjoint open subsets of $W_p$ and 
\begin{equation}
\label{E6.3}
\phi_p|_{U_{j,x}} : U_{j,x}\to \phi_p(U_{j,x})
\end{equation}
is a diffeomorphism. Now since $M^N\backslash C$ is second countable we can cover it by a countable number of such sets $V_x$. So to prove that the $p$-mean is almost everywhere unique it is sufficient to prove it on $V_x$. 

For $x'\in V_x$ denote $x'=(x_1',\ldots,x_N')$, and for $i\in\{1\ldots m_x\}$, write $$(\phi|_{U_{i,x}})^{-1}(x')=(y_i(x'), n_1^i(x'),\ldots n_d^i(x'),r_1^i(x'),\ldots, r_d^i(x')).$$
Let $i,j\in\{1\ldots m_x\}$ satisfy $i\not=j$. If   $y_i(x'), y_j(x')\in Q_{p,\mu(x')}$ then we have 
\begin{equation}
\label{E10ter}
H_{p,\mu(x')}\circ y_i(x')=H_{p,\mu(x')}\circ y_j(x').
\end{equation}
We can assume with the same argument as for~\eqref{E6} and~\eqref{E6bis}  that the maps 
\begin{equation}
\label{E6.4}
\g_{i,k,x'} : s\mapsto \exp_{y_i(x')}(sn_k^i(x'))\quad\hbox{and}\quad \g_{j,k,x'} :  s\mapsto \exp_{y_j(x')}(sn_k^j(x'))
\end{equation}
 are minimal geodesics respectively on $[0,r_k^i(x')]$ and $[0,r_k^j(x')]$.
  So letting $h_p : W_p\to \RR$, $(y,n,r)\mapsto\sum_{k=1}^Nr_k^{p}$, we have 
$$
\f1N h_p\circ (\phi_p|_{U_{i,x}})^{-1}(x')=H_{p,\mu(x')}\circ y_i(x'), \quad 
\f1N h_p\circ (\phi_p|_{U_{j,x}})^{-1}(x')=H_{p,\mu(x')}\circ y_j(x').
$$
 It is sufficient to prove that for all $x'\in V_x$,  
 \begin{equation}
 \label{E10bis}
  h_p\circ (\phi_p|_{U_{i,x}})^{-1}(x')= h_p\circ (\phi_p|_{U_{j,x}})^{-1}(x') 
  \end{equation}
   implies 
\begin{equation}
 \label{E10}
\grad_{x'} \left(h_p\circ (\phi_p|_{U_{i,x}})^{-1}\right)\not=\grad_{x'} \left(h_p\circ (\phi_p|_{U_{j,x}})^{-1}\right).
\end{equation}
Indeed with~\eqref{E10} we will be able to deduce that the set 
\begin{equation}
\label{E11bis}
\left\{(x'\in V_x,\ h_p\circ (\phi_p|_{U_{i,x}})^{-1}=h_p\circ (\phi_p|_{U_{j,x}})^{-1}\right\}
\end{equation}
has codimension~$\ge 1$ in $V_x$ and this will imply that
\begin{equation}
\label{E11}
\left\{(x'\in V_x,\ H_{p,\mu(x')}\circ y_i(x')=H_{p,\mu(x')}\circ y_j(x')\right\}
\end{equation}
has codimension~$\ge 1$ in $V_x$.

Let us prove~\eqref{E10}.
For $k=1,\ldots, N$ let $$m_k^i(x')=-\dot\g_{i,k,x'}(r_k^i(x'))\quad\hbox{and}\quad m_k^j(x')=-\dot\g_{j,k,x'}(r_k^j(x')).$$ These unit vectors satisfy 
$$
\exp_{x_k'}(r_k^i(x') m_k^i(x'))=y_i(x')
\quad\hbox{and}\quad
\exp_{x_k'}(r_k^j(x') m_k^j(x'))=y_j(x').
$$
Then noting that $\left(h_p\circ (\phi_p|_{U_{i,x}})^{-1}\right)(x')=\sum_{k=1}^N(r_k^i)^p(x_k')$ we get
\begin{align*}
&d_{x'}\left(h_p\circ (\phi_p|_{U_{i,x}})^{-1}\right)(\cdot)\\&=\left\langle-p\sum_{k=1}^N(r_k^i)^{p-1}(x')n_k^i(x'),T_{x'}y_i(\cdot)\right\rangle_{T_{y_i(x')}M}\\&-p\left\langle\left((r_1^i(x'))^{p-1}m_1^i(x'),\ldots, (r_N^i(x'))^{p-1}m_N^i(x')\right),\cdot\right\rangle_{T_{x'}M^N}.
\end{align*}
Due to the fact that $(y_i(x'), n^i(x'), r^i(x'))\in W_p$, the first term in the right vanishes. So 
\begin{equation}
 \label{E13}
\grad_{x'}\left(h_p\circ (\phi_p|_{U_{i,x}})^{-1}\right)=-p\left((r_1^i(x'))^{p-1}m_1^i(x'),\ldots, (r_N^i(x'))^{p-1}m_N^i(x')\right)
\end{equation}
and similarly
\begin{equation}\label{E13bis}
\grad_{x'}\left(h_p\circ (\phi_p|_{U_{j,x}})^{-1}\right)=-p\left((r_1^j(x'))^{p-1}m_1^j(x'),\ldots, (r_N^j(x'))^{p-1}m_N^j(x')\right).
\end{equation}
Since  $y_i(x')\not=y_j(x')$ we have $(r_1^i(x'),m_1^i(x'))\not=(r_1^j(x'),m_1^j(x'))$, so $(r_1^i(x'))^{p-1}m_1^i(x')\not=(r_1^j(x'))^{p-1}m_1^j(x')$, from which we conclude that 
$$
\grad_{x'}\left(h_p\circ (\phi_p|_{U_{i,x}})^{-1}\right)\not=\grad_{x'}\left(h_p\circ (\phi_p|_{U_{j,x}})^{-1}\right).
$$
This achieves the proof for the case $p>1$.

Let us now consider the case $p=1$. The result is due to Yang in \cite{Yang:11}, we give the proof here for completeness.

The main difference is that the subset of $M^N$ of points $x=(x_1,\ldots,x_N)$ so that $x_i\in Q_{1,\mu(x)}$ for some~$i$ has positive measure. 

First consider the open subset $U$ of $M^N$ of points $x$ such that for all $i=1,\ldots,N$, $x_i\not\in Q_{1,\mu(x)}$. 

Consider the closed subset $C_0$ of $M^N$ of points $(x_1,\ldots,x_N)={\tilde \phi}(y,n,r)$, with $(y,n,r)\in {\tilde V}$ such that for all $j,k=1,\ldots N$, $n_j=\pm n_k$.  Since $d>1$ and $N>2$ this subset has Lebesgue measure~$0$. 

Replacing $M^N$ by $U$ and $C$ by $C_0\cup C$, the argument is similar until~\eqref{E10bis}. But now we will be able to prove that~\eqref{E10bis} implies~\eqref{E10} only in some neighbourhoods $V_{x,x'}$ to be precised later, of $x'\in V_x$ such that the geodesics 
$$s\mapsto \exp_{y_i(x')}(sn_k^i(x'))\quad\hbox{and}\quad s\mapsto \exp_{y_j(x')}(sn_k^j(x'))$$ are minimal respectively on $[0,r_k^i(x')]$ and $[0,r_k^j(x')]$.
But this will be sufficient since every compact subset of $V_x$ can be covered by a finite number of these neighbourhoods $V_{x,x'}$.

Making the above assumption on $x'$, the proof is similar until~\eqref{E13} and \eqref{E13bis}. Then we have \begin{equation}
 \label{E14}
\grad_{x'}\left(h_1\circ (\phi_1|_{U_{i,x}})^{-1}\right)=-\left(m_1^i(x'),\ldots, m_N^i(x')\right)
\end{equation}
and
\begin{equation}\label{E14bis}
\grad_{x'}\left(h_1\circ (\phi_1|_{U_{j,x}})^{-1}\right)=-\left(m_1^j(x'),\ldots, m_N^j(x')\right).
\end{equation}
Assume 
$$
\grad_{x'}\left(h_1\circ (\phi_1|_{U_{i,x}})^{-1}\right)=\grad_{x'}\left(h_1\circ (\phi_1|_{U_{j,x}})^{-1}\right).
$$
Then for all $k=1,\ldots, N$, $m_k^i(x')=m_k^j(x')$. In particular for $k=1$ this implies (possibly by exchanging $i$ and $j$) that $y_i(x')$ lies in the minimizing geodesic from $x_1'$ to $y_j(x')$. Now since $x'\not\in C_0$ there exists $k\in \{1,\ldots N\}$ such that $x_k'\not\in \{\exp_{y_i(x')}(sn_1^i(x')), \ s\in [-2L,2L]\}$. On the other hand since $m_k^i(x')=m_k^j(x')$,  $y_j(x')$ (or $y_i(x')$) lies on the minimizing geodesic from $x_k'$ to $y_i(x')$ (or $y_j(x')$). As a consequence there are two minimizing geodesics from $y_i(x')$ to $y_j(x')$. But this is impossible since the geodesic from $x_1'$ to $y_j(x')$ is minimizing, contains $y_i(x')$ and $x_1'\not=y_i(x')$ by the fact that we have supposed that $x_1'\not\in Q_{1,\mu(x')}$ and $y_i(x')\in Q_{1,\mu(x')}$. 
 So  
$$
\grad_{x'}\left(h_1\circ (\phi_1|_{U_{i,x}})^{-1}\right)\not=\grad_{x'}\left(h_1\circ (\phi_1|_{U_{j,x}})^{-1}\right),
$$
and by continuity this is true in a neighbourhood $V_{x,x'}$ of $x'$.

Now we consider the case where $x_1' \in Q_{1,\mu(x')}$ and $x_2' \not\in Q_{1,\mu(x')}$. We follow the same lines as in the previous part with the difference that now $y_i(x')=x_1'$ and for the definition of $U_{i,x}$ $W_1$ is replaced by 
$$
W_1^i= \{(y,n,r)\in V,\ r_1=0\}.
$$
The definition of $U_{j,x}$ remains unchanged.
  By~\cite{Yang:10} Theorem~1
$$
\left\|\f1N\sum_{k=2}^Nn_k^i(x')\right\|\le \mu_N(x')(\{x_1'\})
$$
which gives 
\begin{equation}
 \label{E15}
\left\|\sum_{k=2}^Nn_k^i(x')\right\|\le 1.
\end{equation}
Since $d>1$ and $N>2$, the submanifolds of $V_x$  images of $$\left\{(y,n,r)\in U_{i,x},\ \left\|\sum_{k=2}^Nn_k\right\|=1\right\}$$
and 
$$\left\{(y,n,r)\in U_{i,x},\ \sum_{k=2}^Nn_k=0\right\}$$
 by $\phi_1$ have measure $0$, so we can exclude them. On the subset   $$\left\{(y,n,r)\in U_{i,x},\ 0<\left\|\sum_{k=2}^Nn_k\right\|<1\right\},$$ the function $h_1$ is smooth and on its image by $\phi_1$,
\begin{equation}
 \label{E16}
\grad_{x'}\left(h_1\circ (\phi_1|_{U_{i,x}})^{-1}\right)=-\left(0,m_2^i(x'),\ldots, m_N^i(x')\right).
\end{equation}
Again
\begin{equation}\label{E16bis}
\grad_{x'}\left(h_1\circ (\phi_1|_{U_{j,x}})^{-1}\right)=-\left(m_1^j(x'),\ldots, m_N^j(x')\right).
\end{equation}
They are not equal, and this achieves the proof for this case by the same argument as before.

Finally we consider the case where $x_1', x_2' \in Q_{1,\mu(x')}$ with $x_1'=y_i(x')$ and $x_2'=y_j(x')$. We follow the same line as in the previous case, but now for the definition of $U_{j,x}$, $W_1$ is replaced by 
$$
W_1^j= \{(y,n,r)\in V,\ r_2=0\}.
$$
 Again we can exclude the submanifolds of $V_x$  images of $$\left\{(y,n,r)\in U_{j,x},\ \left\|\sum_{k\in\{1,\ldots,N\}, k\not=2}n_k\right\|=1\right\}$$
 and
 $$\left\{(y,n,r)\in U_{j,x},\ \sum_{k\in\{1,\ldots,N\}, k\not=2}n_k=0\right\}$$
  by $\phi_1$ and work on 
\begin{align*}&\phi_1\left(\left\{(y,n,r)\in U_{j,x},\ 0<\left\|\sum_{k\in\{1,\ldots,N\}, k\not=2}n_k\right\|<1\right\}\right)\\& \qquad\cap \phi_1\left(\left\{(y,n,r)\in U_{i,x},\ 0<\left\|\sum_{k=2}^Nn_k\right\|<1\right\}\right).
\end{align*}
On this set $h_1\circ (\phi_1|_{U_{i,x}})^{-1}$ and $h_1\circ (\phi_1|_{U_{j,x}})^{-1}$ are smooth and 
\begin{equation}
 \label{E17}
\grad_{x'}\left(h_1\circ (\phi_1|_{U_{i,x}})^{-1}\right)=-\left(0,m_2^i(x'),\ldots, m_N^i(x')\right).
\end{equation}
\begin{equation}
 \label{E18}
\grad_{x'}\left(h_1\circ (\phi_1|_{U_{j,x}})^{-1}\right)=-\left(m_1^j(x'),0,m_3^j(x'),\ldots, m_N^j(x')\right).
\end{equation}
They are not equal, and this achieves the proof.
\end{proof}

\begin{cor}
\label{C1}
Let $p\in [1,\infty)$ and $X=(X_1,\ldots, X_N)$ a random variable with values in $M^N$, which has an absolutely continuous law. Then almost-surely $\mu(X(\om))$ has a unique $p$-mean $e_p(X(\om))$. 
\end{cor}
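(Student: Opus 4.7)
The plan is to deduce this corollary immediately from Theorem~\ref{T1}. Let me outline the three steps.

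First, I would interpret ``Lebesgue measure on $M^N$'' intrinsically as the Riemannian volume measure on $M^N$ (which in any chart is equivalent to Lebesgue measure, so the notion of null set is unambiguous and coordinate-invariant). Under this interpretation, Theorem~\ref{T1} says that the set
\begin{equation*}
A \;:=\; \bigl\{x \in M^N : Q_{p,\mu(x)} \text{ does not reduce to a single point}\bigr\}
\end{equation*}
is a Lebesgue-null (equivalently, volume-null) subset of $M^N$.

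Second, by hypothesis the law $P_X := X_*\PP$ of $X$ is absolutely continuous with respect to the Riemannian volume measure on $M^N$. A basic property of absolute continuity is that every null set of the reference measure is a null set for $P_X$, so $P_X(A) = 0$, that is,
\begin{equation*}
\PP(X \in A) \;=\; P_X(A) \;=\; 0.
\end{equation*}

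Third, for every $\om$ in the complement of the negligible event $\{X \in A\}$, the empirical measure $\mu(X(\om)) = \tfrac1N\sum_{k=1}^N \d_{X_k(\om)}$ satisfies $X(\om) \notin A$, which by definition means that $Q_{p,\mu(X(\om))}$ has exactly one element. Calling this element $e_p(X(\om))$ gives the desired almost-sure existence and uniqueness.

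There is no real obstacle; the whole content of the statement has been absorbed into Theorem~\ref{T1}, and what remains is only the standard measure-theoretic fact that absolutely continuous laws do not charge Lebesgue-null sets. The only mild subtlety worth flagging is the intrinsic interpretation of ``Lebesgue measure on $M^N$'', but since absolute continuity and nullity are preserved under any smooth change of coordinates, this causes no difficulty.
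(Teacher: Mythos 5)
Your argument is correct and is exactly the (unstated) reasoning the paper intends: Corollary~\ref{C1} is presented as an immediate consequence of Theorem~\ref{T1} via the standard fact that an absolutely continuous law does not charge Lebesgue-null sets. The only caveat, inherited from the paper itself rather than introduced by you, is that Theorem~\ref{T1} assumes $p>1$ or ($d>1$ and $N>2$), so for $p=1$ the conclusion is only available under that additional restriction even though the corollary is stated for all $p\in[1,\infty)$.
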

\begin{cor}
\label{C2}
Let $p\in [1,\infty)$ and $(X_n)_{n\ge 1}$ a sequence of i.i.d. $M$-valued random variables with absolutely continuous laws. Then the process of empirical $p$-means$$\Bigl(e_{p,n}(\om):=e_p\bigl(X_1(\om),\ldots,X_n(\om)\bigr)\Bigr)_{n\ge 1}$$
is well-defined.
\end{cor}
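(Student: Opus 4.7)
The plan is to deduce Corollary~\ref{C2} directly from Corollary~\ref{C1} applied for each fixed $n$, and then to synchronise the full-probability events across all $n$ by a countable intersection. So the work reduces to checking that, for every $n\ge 1$, the joint law of $(X_1,\ldots,X_n)$ on $M^n$ is absolutely continuous with respect to the Riemannian volume on $M^n$.

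For this absolute continuity, I would use independence: if $\nu$ denotes the common (absolutely continuous) law of the $X_k$ on $M$ with density $f$ with respect to the Riemannian volume $\vol_M$, then by independence the law of $(X_1,\ldots,X_n)$ is the product measure $\nu^{\otimes n}$, which has density $(y_1,\ldots,y_n)\mapsto\prod_{k=1}^n f(y_k)$ with respect to the product volume $\vol_M^{\otimes n}$. The product volume $\vol_M^{\otimes n}$ is exactly the Riemannian volume on the product manifold $M^n$, so $\nu^{\otimes n}$ is absolutely continuous in the sense required by Corollary~\ref{C1}.

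Applying Corollary~\ref{C1} to $(X_1,\ldots,X_n)$ then yields an event $\Omega_n$ with $\P(\Omega_n)=1$ on which $\mu(X_1(\om),\ldots,X_n(\om))$ admits a unique $p$-mean, so that $e_{p,n}(\om)$ is well-defined. Setting $\Omega_\infty:=\bigcap_{n\ge 1}\Omega_n$, the countable-intersection-of-full-measure argument gives $\P(\Omega_\infty)=1$, and on $\Omega_\infty$ the whole process $(e_{p,n}(\om))_{n\ge 1}$ is simultaneously well-defined. There is no genuine obstacle here beyond the absolute-continuity verification just above; the corollary is essentially a bookkeeping consequence of Corollary~\ref{C1}, and the only point that deserves to be stated is that independent absolutely continuous coordinates produce an absolutely continuous joint law on the product Riemannian manifold.
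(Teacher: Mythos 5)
Your proof is correct and follows exactly the route the paper intends: the paper states Corollary~\ref{C2} without proof as an immediate consequence of Corollary~\ref{C1}, and your argument (absolute continuity of the product law by independence, then a countable intersection of full-measure events over $n$) is precisely the bookkeeping that justifies this.
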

\begin{remark}
For $p=2$ and $M$ a circle, it has been proved in~\cite{BP03} that the assumption can be weakened: the same result holds if the law has no atom. 
\end{remark}
We believe that it would be interesting to study the behaviour of the process $(e_{p,n})_{n\ge 1}$ in many situations. For instance when the law of $X_1$ is uniform on a compact symmetric space (even the case of the circle is highly non trivial) one would observe a recurrent but irregular and slower and slower process. Again on a compact symmetric space, when the law $\nu$ of $X_1$ has a finite number of $p$-means due to a finite group of symmetries, one would observe an almost stationary behaviour, and at increasingly spaced times jumps between smaller and smaller neighbourhoods of the $p$-means of $\nu$.

\section{Finding the minimizers of some integrated functionals with simulated annealing}\label{Section3}\setcounter{equation}0

Let $M$ be a compact Riemannian manifold. For simplicity and without loss of generality we assume that $M$ has Lebesgue volume~$1$. On $M$ consider a probability law $\nu$ with a density with respect to Lebesgue measure, also denoted by $\nu$.
Assume we are given a continuous function
$\kappa : M\times M\to \RR_+$, where $\kappa(\theta, y)$ is  interpreted as some kind of cost for going from~$\theta$ to~$y$.
Assume furthermore that for all $y\in M$ the function $\theta\mapsto \kappa(\theta, y)$ is smooth and that its first and second derivative in $\theta$ are uniformly bounded in $(\theta,y)$.
 Consider on $M$ the functional 
\begin{equation}
\label{SA-1}
\begin{split}
U : M&\to \RR_+\\
 \theta&\mapsto \int_M\kappa(\theta,y)\nu(dy)
\end{split}
\end{equation} Denote by ${\mathcal M}$ the set of minimizers of $U$.  The aim of this section is to find a continuous semimartingale which converges in law to ${\mathcal M}$. Also we try to avoid using the gradient of $U$, which in many cases is difficult or impossible to compute. 

For this we will use a sequence $(P_k)_{k\ge0}$ of independent random variables with law $\nu$, a Poisson process $N_t$ on $\NN$ with intensity $\g_t^{-1}$ where
\begin{equation}
\label{SA0-1}
\g_t=(1+t)^{-1}.
\end{equation}
 Define
 \begin{equation}
 \label{SA0ter}
 c(U)=2\sup_{\theta,y\in M}\left(\inf_{\phi\in \SC_{\theta,y}}e(\phi)\right),
 \end{equation}
 $\SC_{\theta,y}$ denoting the set of continuous paths $[0,1]\to M$ and for $\phi\in \SC_{\theta,y}$, the elevation $e(\phi)$ being defined as 
 \begin{equation}
 \label{SA0quad}
 e(\phi)=\sup_{0\le t\le 1}U(\phi(t))-U(\theta)-U(y)+\inf_{z\in M}U(z).
 \end{equation}
 Let 
 \begin{equation}
\label{SA0}\b_t=\f1k\ln(1+t),
\end{equation}
 the constant $k$ satisfying
 $k>c(U)$.
 
  We assume that $(N_t)_{t\ge 0}$ is independent of the sequence $(P_k)_{k\ge0}$. We let $(B_t)_{t\ge 0}$ be a Brownian motion with values in $\RR^r$ for some $r\in \NN$, independent of $(N_t)_{t\ge 0}$ and  $(P_k)_{k\ge0}$, and  $\s$  a smooth section  of $TM\otimes (\RR^r)^\ast$: for all $\theta\in M$, $\s(\theta)$ is a linear map $\RR^r\to T_\theta M$. We assume that for all $\theta\in M$, we have $\s(\theta)\s(\theta)^\ast=\id_{T_\theta M}$.
 We fix $\theta_0\in M$ and let $\Theta_t$ be the solution started at $\theta_0$  of the It\^o equation 
\begin{equation}
\label{SA1}
d\Theta_t=\s(\Theta_t)\,dB_t-\b_t\grad_{\Theta_t}\kappa(\cdot,Y_t)\,dt\quad\hbox{with}\quad Y_t=P_{N_t}.
\end{equation}
Recall that if $P(\Theta_t) : T_{\theta_0}M\to T_{\Theta_t}M$ is the parallel transport map along $(\Theta_t)$, then 
\begin{equation}
\label{SA2}
d\Theta_t=P(\Theta_t)d\left(\int_0^\cdot P(\Theta_s)^{-1}\,\circ d\Theta_s\right)_t.
\end{equation}
 Also define $\Theta^{0}_t$ the solution started at $\theta_0$ of the It\^o equation
 \begin{equation}
 \label{SA4} d\Theta_t^{0}=\s(\Theta_t^{0})\,dB_t-\b_t\left(\int_M\grad_{\Theta_t^0}\kappa(\cdot,y)\,\nu(y)dy\right)\,dt.
 \end{equation}
 Note \eqref{SA4} rewrites as 
 \begin{equation}
 \label{SA5} d\Theta_t^{0}=\s(\Theta_t^{0})\,dB_t-\b_t\grad_{\Theta_t^{0}}U\,dt,
 \end{equation}
 so that the same equation with fixed $\beta$ instead of $\beta_t$ has an invariant law with density
 \begin{equation}
 \label{SA6}
 \mu_\b(\theta)=\f1{Z_\b}e^{-2\b U(\theta)},\quad\hbox{with}\quad Z_\b=\int_Me^{-2\b U(\theta')}\,d\theta'.
 \end{equation}
 The process $\Theta^{0}_t$ is an inhomogeneous diffusion with generator 
 \begin{equation}
 \label{SA6bis}
 L_t^{0}(\theta)=\f12\D(\theta)-\b_t\grad_{\theta}U.
 \end{equation}
 Denote by $m_t(\theta)$ the density of $\Theta_t$. 
 
 The process $(\Theta_t, Y_t)$ is Markovian with generator $L_t$ given by 
 \begin{equation}
 \label{SA7}
 \begin{split}
 L_tf(\theta,y)&=\left(\f12\D(\theta)-\b_t\grad_{\theta}\kappa(\cdot, y)\right)f(\cdot,y)
 +\g_t^{-1}\int_M\left(f(\theta,z)-f(\theta,y)\right)\,\nu(dz)\\
 &=L_{1,t}f(\cdot,y)(\theta)+L_{2,t}f(\theta,\cdot)(y).
 \end{split}
 \end{equation}
 
 We know that for all neighbourhood ${\mathcal N}$ of ${\mathcal M}$,  $\int_{\mathcal N}\mu_\b(\theta)\,d\theta$ converges to~$1$  as $\b\to\infty$. So to prove that $\int_{\mathcal N} m_t(\theta)\,d\theta$ converges to~$1$ it is sufficient to prove the following proposition:
 \begin{prop}
 \label{P1}
  The entropy
 \begin{equation}
 J_t:=\int_M\ln\left(\f{m_t(\theta)}{\mu_{\b_t}(\theta)}\right)m_t(\theta)\,d\theta
 \end{equation}
 converges to $0$ as  $t\to\infty$.
 \end{prop}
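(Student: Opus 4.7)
The overall plan is to derive a differential inequality of the form $J_t' \leq -a(t)J_t + b(t)$, with $\int^\infty a\,ds=+\infty$ and $b(t)/a(t)\to 0$, and conclude by a Gronwall-type argument. The dissipation rate $a(t)$ will be supplied by a logarithmic Sobolev inequality for the instantaneous Gibbs measure $\mu_{\b_t}$, while $b(t)$ must absorb two defects: the discrepancy between the effective drift $-\b_t\n_\theta\kappa(\cdot,Y_t)$ of $\Theta_t$ and the ideal drift $-\b_t\n U$, and the time dependence of $\mu_{\b_t}$ through $\b_t'=1/(k(1+t))$.

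First I would write down the forward Kolmogorov equation satisfied by $m_t$. Denoting by $\widetilde m_t(\theta,y)$ the joint density of $(\Theta_t,Y_t)$ with respect to $d\theta\otimes\nu(dy)$ and taking the adjoint of $L_t$ in~\eqref{SA7}, the jump part $L_{2,t}$ is self-adjoint in $y$ with invariant measure $\nu$, so it vanishes upon integration against $\nu(dy)$; one is left with
$$\partial_t m_t=\tfrac12\D m_t + \b_t\,\mathrm{div}(m_t V_t),\qquad V_t(\theta):=\int_M\n_\theta\kappa(\theta,y)\,q_t(y\mid\theta)\,\nu(dy),$$
where $q_t(y\mid\theta)=\widetilde m_t(\theta,y)/m_t(\theta)$. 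If $\Theta_t$ and $Y_t$ were independent, then $q_t\equiv 1$, $V_t=\n U$, and $m_t$ would solve the annealing PDE with $\mu_{\b_t}$ as instantaneous equilibrium. Combining this PDE with an integration by parts and the identity $\n\log\mu_{\b_t}=-2\b_t\n U$, the standard differentiation of relative entropy produces
$$J_t'=-\tfrac12 I_t + \b_t\!\int_M m_t(V_t-\n U)\cdot\n\log(m_t/\mu_{\b_t})\,d\theta + 2\b_t'\!\int_M(U-\mu_{\b_t}(U))m_t\,d\theta,$$
with $I_t=\int|\n\log(m_t/\mu_{\b_t})|^2m_t\,d\theta$ the Fisher information. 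A Holley--Stroock perturbation of the log-Sobolev inequality for the Riemannian volume on $M$ gives a constant $\l_{\b_t}\geq C^{-1}e^{-c(U)\b_t}=C^{-1}(1+t)^{-c(U)/k}$, so $I_t\geq 2\l_{\b_t}J_t$; the hypothesis $k>c(U)$ ensures that $\l_{\b_t}$ is not integrable at infinity.

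The main obstacle is controlling the second term. Young's inequality bounds it by $\tfrac14 I_t+\b_t^2\int_M m_t|V_t-\n U|^2 d\theta$, and a further Cauchy--Schwarz reduces the latter integral to a constant multiple of the $\chi^2$-divergence between $\widetilde m_t$ and $m_t\otimes\nu$. This divergence is exactly what the jump part $L_{2,t}$ dissipates, at the fast rate $\g_t^{-1}=1+t$; meanwhile the drift $-\b_t\n_\theta\kappa(\cdot,y)$, whose $y$-dependence has bounded $\theta$-derivative up to the prefactor $\b_t$, creates correlations between $\Theta_t$ and $Y_t$ only on the short inter-jump scale $\g_t$. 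A separate energy estimate on $\widetilde m_t-m_t\otimes\nu$, tested against the Poincar\'e-type dissipation of $L_{2,t}$ with a forcing of size $O(\b_t)$ coming from diffusion and drift, should yield $\int m_t|V_t-\n U|^2 d\theta\leq C\b_t^2\g_t$. A Pinsker-type bound controls the third term in the decomposition of $J_t'$ by $C\b_t'\sqrt{J_t}$, contributing (after another Young) an extra $O((\b_t')^2/\l_{\b_t})$. Altogether,
$$J_t'\leq -\tfrac12\l_{\b_t}J_t + C\b_t^4\g_t + C(\b_t')^2/\l_{\b_t},$$
and since both error terms decay faster than $\l_{\b_t}$ under $k>c(U)$ (using $\b_t\sim\log t$, $\b_t'\sim 1/t$, $\g_t\sim 1/t$ and $\l_{\b_t}\sim t^{-c(U)/k}$ with exponent strictly less than $1$), a Gronwall argument for this linear inhomogeneous inequality delivers $J_t\to 0$.
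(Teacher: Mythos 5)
Your overall skeleton --- differentiate the relative entropy, extract a dissipation term controlled from below by a Holley--Stroock log-Sobolev constant $\sim e^{-c(U)\b_t}$, treat the drift mismatch $V_t-\n U$ as a perturbation absorbed by Young's inequality, and handle the $\b_t'$ term separately --- is exactly the strategy of the paper up to and including~\eqref{SA13}. The divergence, and the gap, is in how the decorrelation of $(\Theta_t,Y_t)$ is quantified. You reduce everything to the claim $\int_M m_t|V_t-\n U|^2\,d\theta\le C\b_t^2\g_t$, justified only by the sentence that ``a separate energy estimate \dots should yield'' it. That estimate is the entire technical core of the proposition, and it is not a routine Poincar\'e computation: the joint density $\widetilde m_t$ is driven simultaneously by the jump dynamics (which pushes the conditional law $m_t(\cdot\,|\,\theta)$ toward $\nu$ at rate $\g_t^{-1}$) and by the diffusion and $y$-dependent drift in $\theta$ (which regenerate correlations), and when one actually differentiates a divergence between $\widetilde m_t$ and $m_t\otimes\nu$ the cross terms involve $\n\ln m_t(\theta)$, a quantity over which one has no pointwise control. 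Your heuristic ``signal over noise on the inter-jump scale'' gives the right order of magnitude, but as written the proof is circular at this point: you need a quantitative bound on the correlation to close the entropy inequality, and you have not produced one.

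The paper avoids proving any such pointwise-in-time bound. It instead bounds $\|R_t(\theta)\|$ by the total variation distance between $m_t(\cdot\,|\,\theta)$ and $\nu$, hence (Pinsker) by the square root of the conditional relative entropy $I_t$ of~\eqref{SA12}, and then derives a \emph{second} differential inequality~\eqref{SA23} for $I_t$ itself, in which the jump part contributes the strong dissipation $-\g_t^{-1}I_t$ and the diffusion/drift part contributes bounded forcing of order $\b_t^2$ \emph{plus the term} $-\f{dJ_t}{dt}$ (this last term is what becomes of $-L_{1,t}\ln m_t(\theta)$ and is precisely the piece your sketch does not account for). The conclusion then follows not from a single Gronwall inequality but from the coupled system \eqref{SA13}--\eqref{SA23}, resolved with a Lyapunov functional of the form $J_t+\ell_t I_t$ as in Miclo's partial annealing argument (carried out explicitly in the proof of Proposition~\ref{P2}). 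To repair your proposal you would either have to supply the missing energy estimate in full --- which in effect means redoing this two-entropy analysis in a $\chi^2$ setting, where the diffusion cross terms are if anything harder to control --- or adopt the coupled-inequality structure directly.
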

 
 \begin{proof}
 Let us compute 
 \begin{equation}
 \label{SA8.0}
 \begin{split}
 \f{dJ_t}{dt}=\int_M\f{dm_t(\theta)}{dt}\,d\theta-\int_M\f{d \ln \mu_{\b_t}(\theta)}{dt}m_t(\theta)\,d\theta+\int_M\ln\left(\f{m_t(\theta)}{\mu_{\b_t}(\theta)}\right)\f{dm_t(\theta)}{dt}\,d\theta.
 \end{split}
 \end{equation}
 Since for all $t$ $m_t(\theta)$ is a probability density, the first term in the right vanishes. So we get 
 \begin{equation}
 \label{SA8}
 \f{dJ_t}{dt}=2\b'_t\int_MU(\theta)(m_t(\theta)-\mu_{\b_t}(\theta))\,d\theta+\int_M L_t\left[\ln\left(\f{m_t(\theta)}{\mu_{\b_t}(\theta)}\right)\right]m_t(\theta)\,d\theta
 \end{equation}
 where the last term comes from Dynkin formula.
 For the first term in the right we have using~\eqref{SA0}
 \begin{equation}
 \label{SA9}
 \begin{split}
 2\b'_t\int_MU(\theta)(m_t(\theta)-\mu_{\b_t}(\theta))\,d\theta\le 4\|U\|_\infty|\b_t'|\le \f{4\|\kappa\|_\infty}{k(1+t)}.
 \end{split}
 \end{equation}
 Now we split the second term in the right of~\eqref{SA8} into 
 \begin{equation}
 \label{SA10}
 \begin{split} &\int_ML_t\left[\ln\left(\f{m_t(\theta)}{\mu_{\b_t}(\theta)}\right)\right]m_t(\theta)\,d\theta\\
 &=\int_ML_t^{0}\left[\ln\left(\f{m_t(\theta)}{\mu_{\b_t}(\theta)}\right)\right]m_t(\theta)\,d\theta
 +\int_MR_t(\theta,y)\left[\ln\left(\f{m_t(\theta)}{\mu_{\b_t}(\theta)}\right)\right]m_t(\theta)\,d\theta.
 \end{split}
 \end{equation}
 We have 
 \begin{equation}
 \label{SA11}
 \begin{split} &\int_ML_t^{0}\left[\ln\left(\f{m_t(\theta)}{\mu_{\b_t}(\theta)}\right)\right]m_t(\theta)\,d\theta\\
 &=\int_ML_t^{0}\left[\left(\f{m_t(\theta)}{\mu_{\b_t}(\theta)}\right)\right]\mu_{\b_t}(\theta)\,d\theta
 -\f12 \int_M\left\|\n \ln\left(\f{m_t(\theta)}{\mu_{\b_t}(\theta)}\right)\right\|\mu_{\b_t}(\theta)\,d\theta \\
 &=-2\int_M\left\|\n\sqrt{\f{m_t(\theta)}{\mu_{\b_t}(\theta)}}\right\|^2\mu_{\b_t}(\theta)\,d\theta\\
 &\le - 2c_2(\b_t\vee 1)^{-p}\exp\left(-c(U)\b_t\right)J_t
 \end{split}
 \end{equation}
 for some $c_2>0$ and integer $p>0$
 by logarithmic Sobolev inequality (\cite{Holley-Kusuoka-Stroock:89} and \cite{Holley-Stroock:88}, for more details see~\cite{Miclo:92}). Note we used again Dynkin formula to prove the vanishing of the first term in the right of the second line.
 
 As for the second term we have 
 \begin{align*} &\int_MR_t(\theta,y)\left[\ln\left(\f{m_t(\theta)}{\mu_{\b_t}(\theta)}\right)\right]m_t(\theta)\,d\theta\\
 &=\int_M-\b_t\left\langle d\ln\left(\f{m_t(\theta)}{\mu_{\b_t}(\theta)}\right),\grad_{\theta}\kappa(\cdot, y)-\int_M\grad_{\theta}\kappa(\cdot, z)\,\nu(dz)\right\rangle m_t(\theta,y)\,d\theta \,dy\\
 &=-\b_t\int_M\left\langle d\ln\left(\f{m_t(\theta)}{\mu_{\b_t}(\theta)}\right),\int_M \grad_{\theta}\kappa(\cdot, y)\,(m_t(y|\theta)-\nu(y))\right\rangle m_t(\theta)\,d\theta \,dy\\
 &=2\b_t\int_M\sqrt{\f{\mu_{\b_t}}{m_t}(\theta)}\left\langle d\sqrt{\f{m_t}{\mu_{\b_t}}(\theta)},R_t(\theta)\right\rangle m_t(\theta)\,d\theta
 \end{align*}
 with 
 $$
 R_t(\theta)=-\int_M\grad_{\theta}\kappa(\cdot, y)(m_t(y|\theta)-\nu(y))\,dy.
 $$
 So by Cauchy-Schwartz inequality
 \begin{align*} &\int_MR_t(\theta,y)\left[\ln\left(\f{m_t(\theta)}{\mu_{\b_t}(\theta)}\right)\right]m_t(\theta)\,d\theta\\
 &\le 2\b_t\left(\int_M\left\|\n\sqrt{\f{m_t}{\mu_{\b_t}}(\theta)}\right\|^2\mu_{\b_t}(\theta)\,d\theta\right)^{1/2}\left(\int_M\|R_t(\theta)\|^2m_t(\theta)\,d\theta\right)^{1/2}\\
 &\le \b_t^2\int_M\|R_t(\theta)\|^2m_t(\theta)\,d\theta+\int_M\left\|\n\sqrt{\f{m_t}{\mu_{\b_t}}(\theta)}\right\|^2\mu_{\b_t}(\theta)\,d\theta.
 \end{align*}
 Replacing $2c_2$ by $c_2$ in~\eqref{SA11} we can after summing get rid of the second term in the right. 
 Defining
 \begin{equation}
 \label{SA11bis}
 K=\sup_{\theta,y\in M}\|\grad_\theta\kappa(\cdot,y)\|,
 \end{equation}
  let us now bound 
 \begin{align*} \int_M\|R_t(\theta)\|^2m_t(\theta)\,d\theta&=\int_M\left\|\int_M\grad_{\theta}\kappa(\cdot, y)(m_t(y|\theta)-\nu(y))\,dy\right\|^2m_t(\theta)\,d\theta\\&
 \le\int_M\left\|K\int_M|m_t(y|\theta)-\nu(y)|\,dy\right\|^2m_t(\theta)\,d\theta
 \\&\le 32K^2\int_M\left(\int_M\ln\left(\f{m_t(y|\theta)}{\nu(y)}\right)m_t(y|\theta)\,dy\right)m_t(\theta)\,d\theta\\
 &=32K^2I_t
 \end{align*}
 where we have defined 
 \begin{equation}
 \label{SA12}
 I_t=\int_{M\times M}\ln\left(\f{m_t(y|\theta)}{\nu(y)}\right)m_t(y,\theta)\,dy.
 \end{equation}
 We also used classical bound of total variation by entropy (\cite{Holley-Stroock:88}):
 $$
 \int_M|m_t(y|\theta)-\nu(y)|\,dy\le 4\sqrt2\left(\int_M\ln\left(\f{m_t(y|\theta)}{\nu(y)}\right)m_t(y|\theta)\,dy\right)^{1/2}
 $$
 At this stage we proved that 
 \begin{equation}
 \label{SA13}
 \f{dJ_t}{dt}\le \f{4\|\kappa\|_\infty}{k(1+t)}-c_2(\b_t\vee 1)^{-p}\exp\left(-c(U)\b_t\right)J_t+\b_t^2 32K^2I_t.
 \end{equation}
 
 The next step is to find a suitable bound for $\f{dI_t}{dt}$. As before 
 \begin{equation}
 \label{SA14}
 \begin{split}
 \f{dI_t}{dt}&=\int_{M\times M}L_t\left[\ln\left(\f{m_t(y|\theta)}{\nu(y)}\right)\right]m_t(y,\theta)\,d\theta dy\\
 &=\int_{M\times M}(L_{2,t}+L_{1,t})\left[\ln\left(\f{m_t(y|\theta)}{\nu(y)}\right)\right]m_t(y,\theta)\,d\theta dy.
 \end{split}
 \end{equation}
 We begin with the first term:
 \begin{align*}
 &\int_{M\times M}L_{2,t}\left[\ln\left(\f{m_t(y|\theta)}{\nu(y)}\right)\right]m_t(y,\theta)\,d\theta dy\\
 &=\g_t^{-1}\int_{M\times M}\int_M\left[\ln\left(\f{m_t(z|\theta)}{\nu(z)}\right)-\ln\left(\f{m_t(y|\theta)}{\nu(y)}\right)\right]\nu(dz)m_t(y,\theta)\, d\theta dy\\
 &=\g_t^{-1}\int_{M\times M}\ln\left(\f{m_t(y|\theta)}{\nu(y)}\right)\left(\nu(y)-m_t(y|\theta)\right)m_t(\theta)\,d\theta
 .\end{align*}
 By Jensen inequality we have 
 \begin{align*}
 &\int_{M\times M}\ln\left(\f{m_t(y|\theta)}{\nu(y)}\right)\nu(y)m_t(\theta)\,dy d\theta\\
 &=\int_M\left(\int_M\ln\left(\f{m_t(y|\theta)}{\nu(y)}\right)\nu(y)\,dy\right)m_t(\theta)\,d\theta\\
 &\le\int_M\ln\left(\int_M\f{m_t(y|\theta)}{\nu(y)}\nu(y)\,dy\right)m_t(\theta)\,d\theta\\
 &=\int_M\ln(1)m_t(\theta)\,d\theta=0
 \end{align*}
  Consequently
  \begin{align*}
  &\int_{M\times M}L_{2,t}\left[\ln\left(\f{m_t(y|\theta)}{\nu(y)}\right)\right]m_t(y,\theta)\,d\theta dy\\&\le
  -\g_t^{-1}\int_{M\times M}\ln\left(\f{m_t(y|\theta)}{\nu(y)}\right)m_t(y|\theta)m_t(\theta)\,d\theta
  \end{align*}
which rewrites as 
\begin{equation}
\label{SA15}
\int_{M\times M}L_{2,t}\left[\ln\left(\f{m_t(y|\theta)}{\nu(y)}\right)\right]m_t(y,\theta)\,d\theta dy\le -\g_t^{-1}I_t.
\end{equation}
Let us now consider the second term in the right of~\eqref{SA14}. Since 
$$
\ln\left(\f{m_t(y|\theta)}{\nu(y)}\right)=\ln\left(\f{m_t(\theta|y)}{m_t(\theta)}\right)
$$
it rewrites as
\begin{equation}
\label{SA16}
\begin{split}
&\int_{M\times M}L_{1,t}\left[\ln\left(\f{m_t(\theta|y)}{m_t(\theta)}\right)\right]
m_t(y,\theta)\,d\theta dy\\
&=\int_{M\times M}L_{1,t}\left[\ln (m_t(\theta|y))-\ln (m_t(\theta))\right]
m_t(y,\theta)\,d\theta dy.
\end{split}
\end{equation}
But 
\begin{equation}
\label{SA17}
\begin{split}
&\int_{M\times M}L_{1,t}\ln (m_t(\theta|y))
m_t(y,\theta)\,d\theta dy\\
&=\f12 \int_{M\times M}\D\ln (m_t(\theta|y))-\b_t\int_M\left\langle d_\theta \ln m_t(\cdot|y),\grad_{\theta}\kappa(\cdot, y)\right\rangle m_t(y,\theta)\,d\theta dy\\
&=-2\int_{M\times M}\left\|\n\sqrt{m_t(\theta|y)}\right\|^2\,d\theta \nu(dy)\\&-\b_t\int_{M\times M}\left\langle d_\theta m_t(\cdot|y),\grad_{\theta}\kappa(\cdot, y)\right\rangle 
\nu(y)\,d\theta dy.
\end{split}
\end{equation}
Let us bound the absolute value of the last term:
\begin{equation}
\label{SA18}
\begin{split}
&\left|-\b_t\int_{M\times M}\left\langle d_\theta m_t(\cdot|y),\grad_{\theta}\kappa(\cdot, y)\right\rangle \nu(y)\,d\theta dy
\right|\\
&=\left|2\b_t \int_{M\times M}\left\langle d_\theta\sqrt{m_t(\cdot|y)}, \grad_{\theta}\kappa(\cdot, y)\right\rangle \sqrt{m_t(\theta|y)}\nu(dy)\,d\theta dy\right|\\
&\le 2\b_t K\int_{M\times M}\left\|\n \sqrt{m_t(\theta|y)}\right\| \sqrt{m_t(\theta|y)}\nu(y) \,d\theta dy\\
&\le \int_{M\times M}\left(\f12\b_t^2 K^2m_t(\theta|y)+2\left\|\n \sqrt{m_t(\theta|y)}\right\|^2\right)\nu(y) \,d\theta dy\\
&=\f12\b_t^2K^2+2\int_{M\times M}\left\|\n \sqrt{m_t(\theta|y)}\right\|^2\nu(y) \,d\theta dy.
\end{split}
\end{equation}
This yields 
\begin{equation}
\label{SA19}
\int_{M\times M}L_{1,t}\ln (m_t(\theta|y))
m_t(y,\theta)\,d\theta dy\le \f12\b_t^2K^2
\end{equation}
We also have to bound the  last term in~\eqref{SA16}: 
\begin{equation}
\label{SA20}
\begin{split}
-L_{1,t} \ln(m_t(\theta))=-L_{1,t}\ln\left(\f{m_t(\theta)}{\mu_{\b_t}(\theta)}\right)-L_{1,t}\ln(\mu_{\b_t}(\theta)).
\end{split}
\end{equation}
We already know that
\begin{equation}
\label{SA21}
-\int_ML_{1,t}\ln\left(\f{m_t(\theta)}{\mu_{\b_t}(\theta)}\right)m_t(\theta)\,d\theta\le -\f{dJ_t}{dt}+4\|\kappa\|_\infty\b_t'.
\end{equation}

For the second term we have
\begin{equation}
\label{SA22}
\begin{split}
-L_{1,t}\ln(\mu_{\b_t}(\theta))&=2L_{1,t}U(\theta)\\
&=\b_t\D U(\theta)+\b_t^2\left\langle dU, \grad_{\theta}\kappa(\cdot, y)\right\rangle\\
&\le K'(\b_t\vee 1)\b_t
\end{split}
\end{equation}
with 
\begin{equation}
\label{SA23bis}
K'=\sup_{\theta,y\in M}|\D_\theta\kappa(\cdot,y)|+K^2.
\end{equation}

Finally we obtain 
\begin{equation}
\label{SA23}
\f{dI_t}{dt}\le 4\|\kappa\|_\infty \b_t'+K'(\b_t\vee 1)\b_t-\f{dJ_t}{dt}-\g_t^{-1}I_t
\end{equation}
together with~\eqref{SA13}:
\begin{equation}
\label{SA13bis}
 \f{dJ_t}{dt}\le \f{4\|\kappa\|_\infty}{k(1+t)}-c_2(\b_t\vee 1)^{-p}\exp\left(-c(U)\b_t\right)J_t+2\b_t^232K^2I_t.
 \end{equation}
 At this stage we can use the end of the proof of theorem 1 in \cite{Miclo:96} to obtain that under assumptions~\eqref{SA0} and~\eqref{SA0ter} then 
 \begin{equation}
 \label{SA24}
 \lim_{t\to\infty} J_t=0
 \end{equation}
 (notice that in Section~\ref{Section4} we will prove this in a more general context).
 \end{proof}
 \begin{thm}
 \label{T2} Assume 
 \begin{equation}
\label{SA0bis}\b_t=\f1k\ln(1+t),\quad\hbox{and}\quad \g_t=(1+t)^{-1},
\end{equation}
where $k>c(U)$, $(c(U)$ defined in~\eqref{SA0ter}). 
Then for any neighbourhood ${\mathcal N}$ of ${\mathcal M}$, 
\begin{equation}
\label{SA25}
\lim_{t\to\infty}\PP\left[\Theta_t\in {\mathcal N}\right]=1.
\end{equation}
 \end{thm}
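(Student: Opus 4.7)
The theorem is essentially a corollary of Proposition~\ref{P1}, and the plan is to convert convergence of the relative entropy $J_t$ into convergence in probability to $\mathcal{M}$. I would proceed in three short steps.

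First, I would invoke the Csiszár–Kullback–Pinsker inequality to bound the total variation distance between the law of $\Theta_t$ and the Gibbs measure $\mu_{\b_t}$ by $\sqrt{J_t/2}$. Since Proposition~\ref{P1} gives $J_t\to 0$, this yields
\begin{equation*}
\lim_{t\to\infty}\sup_{A\subset M}\left|\int_A m_t(\theta)\,d\theta-\int_A\mu_{\b_t}(\theta)\,d\theta\right|=0.
\end{equation*}

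Second, I would show that the Gibbs measures $\mu_{\b_t}$ concentrate on $\mathcal{M}$ as $t\to\infty$. Given a neighbourhood $\mathcal{N}$ of $\mathcal{M}$, compactness of $M\setminus\mathcal{N}$ and continuity of $U$ provide $\e>0$ such that $U(\theta)\ge \inf_M U+\e$ for all $\theta\in M\setminus\mathcal{N}$. On the other hand, by continuity of $U$ and positivity of Lebesgue volume, for every $\d>0$ the set $\{U\le \inf U+\d\}$ has strictly positive Lebesgue measure, which gives the standard Laplace lower bound $Z_{\b_t}\ge c(\d)e^{-2\b_t(\inf U+\d)}$. Choosing $\d<\e$ and using $\b_t\to\infty$ yields
\begin{equation*}
\mu_{\b_t}(M\setminus\mathcal{N})\le \frac{\mathrm{vol}(M)\,e^{-2\b_t(\inf U+\e)}}{c(\d)\,e^{-2\b_t(\inf U+\d)}}=\frac{\mathrm{vol}(M)}{c(\d)}e^{-2\b_t(\e-\d)}\xrightarrow[t\to\infty]{}0.
\end{equation*}

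Third, I would combine the two ingredients:
\begin{equation*}
\PP[\Theta_t\in\mathcal{N}]=\int_{\mathcal{N}}m_t(\theta)\,d\theta\ge \int_{\mathcal{N}}\mu_{\b_t}(\theta)\,d\theta-\sqrt{J_t/2},
\end{equation*}
and both terms on the right tend respectively to $1$ and $0$, which gives~\eqref{SA25}.

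The only genuine work is already packaged in Proposition~\ref{P1}; the remaining obstacles are mild and standard. The slightly delicate point is the Laplace lower bound on $Z_{\b_t}$, for which one needs a little regularity of $U$ near its minimum set; since $\kappa$ is assumed smooth with bounded first and second derivatives in $\theta$, $U$ is Lipschitz on $M$ and the sublevel sets $\{U\le \inf U+\d\}$ have volume at least of order $\d^d$, which is more than enough for the argument above.
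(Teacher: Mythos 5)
Your proof follows exactly the same route as the paper's: Proposition~\ref{P1} gives $J_t\to 0$, a Pinsker-type inequality converts entropy decay into total-variation convergence of $m_t$ to $\mu_{\b_t}$, and the Laplace-type concentration of the Gibbs measures $\mu_{\b_t}$ on $\mathcal{M}$ finishes the argument. The paper states the last two ingredients without proof, so your explicit Laplace lower bound on $Z_{\b_t}$ is a welcome but inessential addition; the argument is correct.
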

 \begin{proof}
 We use Proposition~\ref{P1} together with the fact that 
 $$
 \|m_t-\mu_{\b_t}\|\le 4\sqrt{2J_t}
 $$
 and
 $$
 \lim_{t\to\infty}\mu_{\b_t}({\mathcal N})=1.
 $$
 \end{proof}
 \section{Application to location of $p$-means in symmetric spaces}\label{Section4}\setcounter{equation}0
 In this section we assume that $M$ is a compact symmetric space endowed with the canonical Riemannian metric of volume~$1$. Denote by $\rho$ the Riemannian distance in $M$, $D$ its diameter. We fix $p\ge 1$ and consider a probability measure $\nu$ on $M$. We aim to find at least one element of $Q_{p,\nu}$  by using the result of the previous section. In particular if $\nu$ has a unique $p$-mean $e_p$, then we will be able to construct a process which converges in probability to $e_p$ as $t\to \infty$. 
 
 Denote by $p(s,x,y)$ the heat kernel on $M$, and for $s>0$ let $\nu_s$ be the probability measure with density
 \begin{equation}
 \label{FEP01}
 \nu_s(y)=\int_Mp(s,y,z)\nu(dz),
 \end{equation}
 and let 
 \begin{equation}
 \label{FEP02}
 \begin{split}
 \kappa_s : M\times M&\to \RR_+\\
  (\theta,y)&\mapsto \int_Mp(s,\theta, z)\rho^p(z,y)\,dz,
  \end{split}
 \end{equation}
 and 
 \begin{equation}
 \label{FEP03}
 \begin{split}
 U_{s_1,s_2} : M&\to \RR_+\\
  \theta&\mapsto \int_M\kappa_{s_1}(\theta,y)\nu_{s_2}(y)\,dy.
  \end{split}
 \end{equation}
 Also let $U=H_{p,\nu}$.
 Clearly $\nu_{s_1}$ and $\kappa_{s_2}$ satisfy the assumption of the previous section. Moreover, denoting by ${\mathcal M}_{s_1,s_2}$ the set of minimizers of $U_{s_1,s_2}$ then as $s_1,s_2\to 0$ we have ${\mathcal M}_{s_1,s_2}\to Q_{p,\nu}$ is the sense that for any neighbourhood ${\mathcal N}$ of $Q_{p,\nu}$, we have ${\mathcal M}_{s_1,s_2}\subset {\mathcal N}$ for all $s_1,s_2$ sufficiently small. This is due to the fact that as $s_1,s_2\to 0$, $U_{s_1,s_2}(\theta)\to U(\theta)$ uniformly in $\theta$.
 \begin{lemma}
 \label{L2}
 For all $s_1,s_2>0$ we have 
 \begin{equation}
 \label{FEP04}
 U_{s_1,s_2}(\theta)=U_{0,s_1+s_2}(\theta)=\int_M\rho(\theta,y)\nu_{s_1+s_2}(y)\,dy.
 \end{equation}
 \end{lemma}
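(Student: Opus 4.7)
The statement is an identity between two different ways of applying the heat semigroup to $\rho^p$; the proof is a Fubini computation whose only non-routine ingredient is a symmetry identity specific to symmetric spaces.

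First, substitute the definitions of $\kappa_{s_1}$ from~\eqref{FEP02} and $\nu_{s_2}$ from~\eqref{FEP01} into~\eqref{FEP03} and apply Fubini to obtain
\begin{equation*}
U_{s_1,s_2}(\theta)=\iiint_{M^3} p(s_1,\theta,z)\,\rho^p(z,y)\,p(s_2,y,w)\,dz\,dy\,\nu(dw).
\end{equation*}

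The key step is the following swap identity: for every $s>0$ and every $\theta,y\in M$,
\begin{equation*}
\int_M p(s,\theta,z)\,\rho^p(z,y)\,dz=\int_M p(s,y,z)\,\rho^p(\theta,z)\,dz.
\end{equation*}
To prove it, fix $\theta$ and $y$ and let $m$ be the midpoint of any minimising geodesic from $\theta$ to $y$ (which exists by completeness of $M$). Because $M$ is a symmetric space, the geodesic symmetry $\sigma_m$ at $m$ is a volume-preserving isometry of $M$; by construction $\sigma_m(\theta)=y$ and $\sigma_m(y)=\theta$. Changing variable $z\mapsto\sigma_m(z)$ on the left-hand side and using the invariance of both $p$ and $\rho$ under $\sigma_m$ immediately yields the right-hand side.

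To conclude, apply the swap identity with $s=s_1$ inside the triple integral above and then integrate out $y$ using the symmetry $p(s_1,y,z)=p(s_1,z,y)$ together with the Chapman--Kolmogorov equation
\begin{equation*}
\int_M p(s_1,z,y)\,p(s_2,y,w)\,dy=p(s_1+s_2,z,w).
\end{equation*}
The triple integral then collapses to
\begin{equation*}
\iint_{M^2}\rho^p(\theta,z)\,p(s_1+s_2,z,w)\,dz\,\nu(dw)=\int_M\rho^p(\theta,z)\,\nu_{s_1+s_2}(z)\,dz,
\end{equation*}
which is precisely $U_{0,s_1+s_2}(\theta)$ (reading~\eqref{FEP03} with $s_1=0$, whereupon $\kappa_0(\theta,z)=\rho^p(\theta,z)$).

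The only genuinely non-trivial ingredient is the swap identity; everything else is Fubini plus the heat semigroup property. The symmetric-space hypothesis is used exactly and only to produce the isometry $\sigma_m$ exchanging two given points of $M$, which is where the main obstacle lies: without it, the two sides of the swap identity need not coincide and the interchange of the heat smoothing in $\theta$ (via $\kappa_{s_1}$) with that in $y$ (via $\nu_{s_2}$) would break.
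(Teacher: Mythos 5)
Your proof is correct and follows essentially the same route as the paper's: the midpoint geodesic symmetry $i_m$ exchanging $\theta$ and $y$ gives the swap identity, and the Chapman--Kolmogorov (convolution) property of the heat semigroup then collapses the integral to $U_{0,s_1+s_2}$. Your write-up is slightly more explicit about the Fubini step and about where the symmetric-space hypothesis enters, but the argument is the same.
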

 \begin{proof}
 Fix $\theta,y\in M$, let $m$ be the middle point of a minimal geodesic from $\theta$ to $y$ and $i_m$ the symmetry centered at $m$. We have 
 \begin{align*} \int_Mp(s_1,\theta,z)\rho^p(z,y)\,dz&=\int_Mp(s_1,i_m(\theta),i_m(z))\rho^p(i_m(z),i_m(y))\,dz\\
 &=\int_Mp(s_1,i_m(\theta),z')\rho^p(z',i_m(y))\,dz'\\
 &=\int_Mp(s_1,y,z')\rho^p(z',\theta)\,dz'\\
 &=\int_M\rho^p(\theta,z')p(s_1,z',y)\,dz'
 \end{align*}
 where we first used the invariance by isometry of the heat kernel and then did the change of variable $z'=i_m(z)$ in the integral and finally used the symmetry of the heat kernel.  To finish the proof we are left to use the convolution property of the heat semigroup. 
 \end{proof}
 \begin{cor}
 \label{C3}
 Defining 
 \begin{equation}
 \label{FEP11bis}
 K''=\sup_{\theta,y\in M}\|\grad_\theta\rho(\cdot,y)\|,
 \end{equation}
 we have 
  for all $s_1, s_2>0$, $\theta,y\in M$,
 \begin{equation}
 \label{FEP05}
 \|\grad_\theta\kappa_{s_1}(\cdot,y)\|\le pD^{p-1}K''=:K \quad \hbox{and}\quad \|\grad_{\theta}U_{s_1,s_2}\| \le K.
 \end{equation}
 \end{cor}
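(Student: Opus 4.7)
The plan is to reduce both bounds to pointwise differentiation of $\rho^p$ by moving the heat kernel off of $\theta$, using the symmetrization argument already established in Lemma~\ref{L2}.

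For the first inequality, I would first observe that the chain of equalities in the proof of Lemma~\ref{L2} in fact shows (with the same midpoint/symmetry argument, before integrating against $\nu$) the identity
\begin{equation*}
\kappa_{s_1}(\theta,y)=\int_M p(s_1,\theta,z)\rho^p(z,y)\,dz=\int_M\rho^p(\theta,z)\,p(s_1,z,y)\,dz.
\end{equation*}
The second form exhibits $\theta$-dependence only through $\rho^p(\theta,\cdot)$, which is differentiable away from the (measure-zero) cut locus of each $z$, with the pointwise bound
$\|\grad_\theta\rho^p(\cdot,z)\|=p\rho^{p-1}(\theta,z)\|\grad_\theta\rho(\cdot,z)\|\le pD^{p-1}K''$. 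Since $p(s_1,\cdot,y)$ is smooth, a dominated convergence argument allows differentiation under the integral, giving
\begin{equation*}
\grad_\theta\kappa_{s_1}(\cdot,y)=\int_M p(s_1,z,y)\,\grad_\theta\rho^p(\cdot,z)\,dz,
\end{equation*}
and taking norms yields
$\|\grad_\theta\kappa_{s_1}(\cdot,y)\|\le pD^{p-1}K''\int_M p(s_1,z,y)\,dz=pD^{p-1}K''=K$.

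For the second inequality, I would simply invoke Lemma~\ref{L2} to write $U_{s_1,s_2}(\theta)=\int_M\rho^p(\theta,y)\nu_{s_1+s_2}(dy)$ (reading $\rho$ as $\rho^p$ in the statement of that lemma, which is clearly its intended form), differentiate under the integral by the same argument, and bound the integrand pointwise by $pD^{p-1}K''$.

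The only mild technical point is justifying the interchange of gradient and integral despite the lack of global smoothness of $\rho^p(\cdot,z)$; this is handled by noting that the cut locus of $z$ has measure zero in $\theta$, the integrand is Lipschitz in $\theta$ with a uniform Lipschitz constant $pD^{p-1}K''$, and the heat kernel is an integrable weight, so the standard dominated-convergence/differentiation-under-the-integral-sign applies. Everything else is a direct triangle-inequality bound, so there is no real obstacle: the content of the corollary is essentially that the symmetric-space identity from Lemma~\ref{L2} preserves the naive Lipschitz constant of $\rho^p$ under smoothing.
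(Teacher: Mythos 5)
Your proof is correct and follows exactly the route the paper intends: the paper states this as an immediate corollary of Lemma~\ref{L2}, whose proof establishes precisely the pointwise identity $\kappa_{s_1}(\theta,y)=\int_M\rho^p(\theta,z)p(s_1,z,y)\,dz$ that you use to transfer the uniform Lipschitz constant $pD^{p-1}K''$ of $\rho^p$ through the heat-kernel averaging. Your added care about differentiating under the integral across the cut locus is a sound filling-in of details the paper leaves implicit.
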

 
 With all these properties we would like to find $s_1(t)\searrow 0$ and $s_2(t)\searrow 0$ such that the process $\Theta_t$ started at $\theta_0$ and solution to 
 \begin{equation}
 \label{FEP06}
 d\Theta_t=\s(\Theta_t)\,dB_t-\b_t\grad_{\Theta_t}\kappa_{s_1(t)}(\cdot,Y_t^{s_2(t)})\,dt
 \end{equation}
 converges in law to $e_p$, $(N_t,Y_t^{s_2(t)})$ being a Poisson point process in $[0,\infty)\times M$ with intensity $\g(t)^{-1}\nu_{s_2(t)}(y)\,dt\,dy$, independent of $(B_t)$. This is the object of the next theorem in which we will take
 $$
 s_1(t)=s_2(t)=s_t=(\ln(1+t))^{-1}.
 $$

 So define $\Theta^{0}_t$ the solution started at $\theta_0$ of the It\^o equation
 \begin{equation}
 \label{FEP4} d\Theta_t^{0}=\s(\Theta_t^{0})\,dB_t-\b_t\left(\int_M\grad_{\Theta_t^0}\kappa_{s_t}(\cdot,y)\,\nu_{s_t}(y)dy\right)\,dt.
 \end{equation}
 Notice that using Lemma~\ref{L2}, \eqref{FEP4} rewrites as 
 \begin{equation}
 \label{FEP5} d\Theta_t^{0}=\s(\Theta_t^{0})\,dB_t-\b_t\grad_{\Theta_t^{0}}U_{2s_t}\,dt,
 \end{equation}
 where $U_{2s_t}:=U_{0,2s_t}$, 
 so that the same equation with fixed $(\beta,s)$ instead of $(\beta_t,s_t)$ has an invariant law with density
 \begin{equation}
 \label{FEP6}
 \mu_{\b,s}(\theta)=\f1{Z_{\b,s}}e^{-2\b U_{2s}(\theta)},\quad\hbox{with}\quad Z_{\b,s}=\int_Me^{-2\b U_{2s}(\theta')}\,d\theta'.
 \end{equation}
 
 The process $\Theta^{0}_t$ is an inhomogeneous diffusion with generator 
 \begin{equation}
 \label{FEP6bis}
 L_t^{0}(\theta)=\f12\D(\theta)-\b_t\grad_{\theta}U_{2s_t}.
 \end{equation}
 Denote by $m_t(\theta)$ the density of $\Theta_t$. 
  
  Let $Y_t:=Y_t^{s_t}$.
 The process $(\Theta_t, Y_t)$ is Markovian with generator $L_t$ given by 
 \begin{equation}
 \label{FEP7}
 \begin{split}
 L_tf(\theta,y)&=\left(\f12\D(\theta)-\b_t\grad_{\theta}\kappa_{s_t}(\cdot, y)\right)f(\cdot,y)
 +\g_t^{-1}\int_M\left(f(\theta,z)-f(\theta,y)\right)\,\nu_{s_t}(dz)\\
 &=L_{1,t}f(\cdot,y)(\theta)+L_{2,t}f(\theta,\cdot)(y).
 \end{split}
 \end{equation}
 
 We know that for all neighbourhood ${\mathcal N}$ of $Q_{p,\nu}$,  $\int_{\mathcal N}\mu_{\b,s}(\theta)\,d\theta$ converges to~$1$  as $\b\to\infty$, uniformly in $s$ sufficiently small. 
 Again define
 \begin{equation}
 J_t:=\int_M\ln\left(\f{m_t(\theta)}{\mu_{\b_t,s_t}(\theta)}\right)m_t(\theta)\,d\theta.
 \end{equation}
 
 \begin{thm}
 \label{T3}
 Assume 
 \begin{equation}
 \label{FEP0}
 \b_t=\f1k\ln(1+t), \  \g_t=(1+t)^{-1}, \  s_1(t)=s_2(t)=s(t)=(\ln(1+t))^{-1}.
 \end{equation}
 where $k>c(U)$, $(c(U)$ defined in~\eqref{SA0ter}). 
Then for any neighbourhood ${\mathcal N}$ of $Q_{p,\nu}$, the process $\Theta_t$ defined in equation~\eqref{FEP06} satisfies
\begin{equation}
\label{FEP08}
\lim_{t\to\infty}\PP\left[\Theta_t\in {\mathcal N}\right]=1.
\end{equation}
 \end{thm}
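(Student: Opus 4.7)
The plan is to mimic the proof of Theorem~\ref{T2} via the entropy method, controlling the relative entropy
\begin{equation*}
J_t=\int_M\ln\Bigl(\frac{m_t(\theta)}{\mu_{\beta_t,s_t}(\theta)}\Bigr)m_t(\theta)\,d\theta
\end{equation*}
of the law of $\Theta_t$ with respect to the now doubly time-dependent Gibbs measure $\mu_{\beta_t,s_t}$, coupled with the conditional entropy
\begin{equation*}
I_t:=\int_{M\times M}\ln\Bigl(\frac{m_t(y|\theta)}{\nu_{s_t}(y)}\Bigr)m_t(y,\theta)\,d\theta\,dy.
\end{equation*}
Computing $dJ_t/dt$ as in~\eqref{SA8}, the only structural difference from Proposition~\ref{P1} is the extra term
\begin{equation*}
2\beta_t\int_M\partial_t U_{2s_t}(\theta)\bigl(m_t(\theta)-\mu_{\beta_t,s_t}(\theta)\bigr)\,d\theta
\end{equation*}
coming from the $s_t$-dependence of the reference functional $U_{2s_t}$, together with analogous modifications in the coupled evolution of $I_t$.

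The key verification is that these new $s_t$-terms are negligible compared to the dissipation. By Lemma~\ref{L2} the function $U_{2s}$ is obtained by applying the heat semigroup to $U$, so $\partial_s U_{2s}=\frac12\Delta U_{2s}$, which is uniformly bounded on $M\times(0,s_0]$ by heat kernel smoothing. With $s_t=1/\ln(1+t)$ one has $|s_t'|\le 1/((1+t)(\ln(1+t))^2)$, so after multiplication by $\beta_t$ of logarithmic order the extra contribution to $dJ_t/dt$ is $O(1/((1+t)\ln(1+t)))$, even smaller than the main error $4\|\kappa\|_\infty/(k(1+t))$ of~\eqref{SA9}. The logarithmic Sobolev inequality used in~\eqref{SA11} must now be taken with respect to $\mu_{\beta_t,s_t}$, i.e.\ with $U$ replaced by $U_{2s_t}$; since $U_{2s_t}\to U$ uniformly on $M$, the elevation constant satisfies $c(U_{2s_t})\to c(U)$ as $s_t\searrow 0$, so for $t$ large enough $k>c(U_{2s_t})$ and the bounds of \cite{Holley-Kusuoka-Stroock:89,Holley-Stroock:88} apply with uniform constants. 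The uniform gradient bounds $\|\grad_\theta\kappa_{s_t}(\cdot,y)\|\le K$ needed in the analogue of~\eqref{SA11bis} are exactly provided by Corollary~\ref{C3}.

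With these modifications one obtains the same coupled differential inequalities on $(J_t,I_t)$ as in~\eqref{SA13bis} and~\eqref{SA23}, up to extra error terms of order $O(1/((1+t)\ln(1+t)))$, and the conclusion $J_t\to 0$ follows from the end of the proof of Theorem~1 in~\cite{Miclo:96} exactly as invoked at the end of Proposition~\ref{P1}. Pinsker's inequality then yields $\|m_t-\mu_{\beta_t,s_t}\|_{TV}\le 4\sqrt{2J_t}\to 0$, while the uniform convergence $U_{2s_t}\to U$ discussed just before Lemma~\ref{L2} implies $\mathcal{M}_{s_t,s_t}\subset\mathcal{N}$ eventually and hence $\mu_{\beta_t,s_t}(\mathcal{N})\to 1$ for every neighbourhood $\mathcal{N}$ of $Q_{p,\nu}$. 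The main obstacle is the uniform control, as $s_t\searrow 0$, of the logarithmic Sobolev constant and of the elevation $c(U_{2s_t})$ that governs the exponential dissipation rate; everything else is subleading thanks to the deliberately slow choice $s_t=1/\ln(1+t)$, which makes the $s_t$-corrections small compared to the already delicate logarithmic cooling schedule.
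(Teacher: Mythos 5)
Your overall strategy --- relative entropy $J_t$ with respect to the moving Gibbs measure $\mu_{\b_t,s_t}$ coupled with the conditional entropy $I_t$, logarithmic Sobolev inequality, Pinsker's inequality, and concentration of $\mu_{\b,s}$ on $\mathcal N$ --- is exactly the paper's (Proposition~\ref{P2}), and your handling of the LSI constant via $\limsup_{t\to\infty}c(U_{2s_t})\le c(U)<k$ and of the gradient bounds via Corollary~\ref{C3} matches the paper. The gap is in the quantitative control of the $s_t$-dependent terms, which is precisely where Theorem~\ref{T3} differs from Theorem~\ref{T2}. You assert that $\partial_s U_{2s}$ (equivalently $\D U_{2s}$, by the heat equation) is uniformly bounded on $M\times(0,s_0]$ ``by heat kernel smoothing''. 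This is unjustified: $U=H_{p,\nu}$ is only Lipschitz ($\rho^p(\cdot,y)$ fails to be $C^2$ across the cut locus of $y$, and at $y$ itself when $p<2$, and $\nu$ may be a Dirac mass), and for Lipschitz data $\D P_sU$ generically blows up as $s\to 0$. The paper only obtains $|\D U_{2s}|\lesssim s^{-2}$ and $|\partial_s\ln p(s,\theta,z)|\le C_0s^{-2}$ from the Hsu--Sheu heat-kernel derivative estimates (see \eqref{FEP9bis} and \eqref{FEP22}), and even the resulting contribution to $dJ_t/dt$ is of order $\ln(1+t)/(1+t)$, i.e.\ a log factor \emph{larger} than the main error of \eqref{SA9}, not smaller as you claim.

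This matters most in the inequality for $dI_t/dt$: the term $-L_{1,t}\ln\mu_{\b_t,s_t}=2\b_tL_{1,t}U_{2s_t}$ contains $\b_t\D U_{2s_t}$, which with the correct bound is of order $(\b_t\vee 1)\b_ts_t^{-2}\sim(\ln(1+t))^4$ --- not $O(1/((1+t)\ln(1+t)))$, and not even bounded. So one does \emph{not} get ``the same coupled differential inequalities as \eqref{SA13bis} and \eqref{SA23} up to $O(1/((1+t)\ln(1+t)))$'': the paper's \eqref{FEP13ter} carries an error $k_1(\ln(1+t))^4$ in place of the $K'(\b_t\vee1)\b_t$ of \eqref{SA23}, and one cannot invoke the end of the proof of Theorem~1 of \cite{Miclo:96} verbatim. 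The paper redoes that comparison, setting $K_t=J_t+\ell_tI_t$ with $\ell_t\sim c_1(\ln(1+t))^2/(1+t)$ and checking $\ell_t(\ln(1+t))^4/B_t\to0$ against the subpolynomial dissipation $B_t\sim(1-\ell_t)c_2(\ln(1+t))^{-p}(1+t)^{-(1-\e)}$. The worse logarithmic powers turn out to be harmless, but this must be verified; your write-up hides the one estimate that actually requires work. A smaller omission of the same kind: the reference measure $\nu_{s_t}$ inside $I_t$ is itself time-dependent, producing an extra term $-\int\partial_t\ln(\nu_{s_t}(y))\,m_t(\theta,y)\,d\theta\,dy$, which the paper bounds by $C_0/(1+t)$ using the same heat-kernel estimate \eqref{FEP9bis}.
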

 \begin{proof}
 We use Proposition~\ref{P2} below together with the fact that 
 $$
 \|m_t-\mu_{\b_t,s_t}\|\le 4\sqrt{2J_t}
 $$
 and
 $$
 \lim_{t\to\infty}\mu_{\b_t,s_t}({\mathcal N})=1.
 $$
 \end{proof}

 \begin{prop}
 \label{P2}
  The entropy
 \begin{equation}
 J_t=\int_M\ln\left(\f{m_t(\theta)}{\mu_{\b_t,s_t}(\theta)}\right)m_t(\theta)\,d\theta
 \end{equation}
 converges to $0$ as  $t\to\infty$.
 \end{prop}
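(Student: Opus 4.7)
The plan is to follow the proof of Proposition~\ref{P1} step by step, producing a coupled system of differential inequalities on $J_t$ and the auxiliary conditional entropy
\[
I_t:=\int_{M\times M}\ln\!\left(\f{m_t(y|\theta)}{\nu_{s_t}(y)}\right)m_t(y,\theta)\,d\theta\,dy,
\]
and then invoking the end of the proof of Theorem~1 in~\cite{Miclo:96} exactly as at the close of Proposition~\ref{P1}. Corollary~\ref{C3} provides the gradient bound $K=pD^{p-1}K''$ on $\kappa_s$ \emph{uniformly in $s$}, and standard heat-kernel regularity on the compact $M$ gives an analogous uniform bound $K'$ on $|\Delta_\theta\kappa_s(\cdot,y)|+K^2$. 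Consequently the structural ingredients of Proposition~\ref{P1}, namely the Kolmogorov forward identity for $m_t$, the split $L_t=L^{0}_t+R_t$ of~\eqref{SA10}, the Cauchy--Schwarz bound~\eqref{SA11}, the jump dissipation~\eqref{SA15}, the logarithmic bound~\eqref{SA19}, and the drift estimate~\eqref{SA22}, all transcribe with $\kappa,\nu,U$ replaced by $\kappa_{s_t},\nu_{s_t},U_{2s_t}$.

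The genuine novelty is to control the extra terms produced by time differentiation through the additional dependence on~$s_t$. In $dJ_t/dt$ the analogue of~\eqref{SA8} picks up
\[
-2\b_t s'_t\int_M\bigl(\partial_sU_{2s}\bigr)_{s=s_t}(\theta)\bigl(m_t(\theta)-\mu_{\b_t,s_t}(\theta)\bigr)\,d\theta,
\]
and, by Lemma~\ref{L2} together with the heat equation $2\partial_s\nu_{2s}=\D\nu_{2s}$, this is of size $\b_t|s'_t|$ times a quantity bounded in $s\ge s_t$ by a constant; with $s_t=1/\ln(1+t)$ and $|s'_t|=(1+t)^{-1}(\ln(1+t))^{-2}$, this is $o(1/(1+t))$ and is absorbed by the pre-existing $C/(1+t)$ term of~\eqref{SA9}. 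A similar harmless correction appears in $dI_t/dt$ from the $t$-dependence of $\nu_{s_t}$, controlled by $|s'_t|$ and the smoothness of the heat semigroup at fixed positive time. Every other piece of~\eqref{SA8.0}--\eqref{SA23} goes through unchanged, yielding
\[
\f{dJ_t}{dt}\le \f{C}{1+t}-c_2(\b_t\vee1)^{-p}e^{-c(U)\b_t}J_t+64\b_t^2K^2I_t,
\]
\[
\f{dI_t}{dt}\le \f{C}{1+t}+K'(\b_t\vee1)\b_t-\f{dJ_t}{dt}-\g_t^{-1}I_t,
\]
which is exactly the coupled system treated at the end of the proof of Theorem~1 in~\cite{Miclo:96}, from which $\lim_{t\to\infty}J_t=0$ follows.

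The delicate point, and the main obstacle, is the logarithmic Sobolev inequality for $\mu_{\b_t,s_t}$: the cooling schedule $\b_t=k^{-1}\ln(1+t)$ with $k>c(U)$ is calibrated against the critical elevation of the \emph{limit} potential $U$, whereas the Holley--Kusuoka--Stroock estimate applied directly to $\mu_{\b_t,s_t}$ naturally involves the elevation $c(U_{2s_t})$ of the smoothed potential. The remedy is a Holley--Stroock perturbation argument: since $\|U_{2s_t}-U\|_\infty\to 0$ as $s_t\to 0$ with a rate controlled by the modulus of continuity of $U$ on the compact $M$, and in any case at least of order $1/\ln(1+t)$, the quantity $\b_t\|U_{2s_t}-U\|_\infty$ stays bounded, so the log-Sobolev constants of $\mu_{\b_t,s_t}$ and $\mu_{\b_t}$ differ only by a bounded multiplicative factor. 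This preserves the dissipation rate $c_2(\b_t\vee1)^{-p}e^{-c(U)\b_t}J_t$ needed to trigger the argument of~\cite{Miclo:96}, and is the only step where the proof is not a pure transcription of Proposition~\ref{P1}.
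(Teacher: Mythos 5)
Your overall architecture---couple $J_t$ with the conditional entropy $I_t$, transcribe the estimates of Proposition~\ref{P1} with $\kappa_{s_t},\nu_{s_t},U_{2s_t}$ in place of $\kappa,\nu,U$, and close with a Miclo-type comparison---is the paper's. But two of your quantitative claims are false, and one of them sits on the critical path. First, you assert that $|\D_\theta\kappa_s(\cdot,y)|$ admits a bound \emph{uniform in $s$}, so that the drift estimate \eqref{SA22} transcribes verbatim. It does not: $U=H_{p,\nu}$ is only Lipschitz (think of $\rho^p(\cdot,y)$ at the cut locus, or at $y$ when $p<2$), and the second derivatives of its heat-kernel regularization degenerate. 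The paper uses $|\n^2_\theta\ln p(s,\theta,z)|\le C_2/s^2$ and obtains $L_{1,t}\ln\mu_{\b_t,s_t}\le K'(\b_t\vee1)\b_t s_t^{-2}$ in \eqref{FEP22}, i.e.\ an extra factor $(\ln(1+t))^2$ in the $I$-inequality; likewise the $\partial_t\ln\mu_{\b_t,s_t}$ and $\partial_t\ln\nu_{s_t}$ terms come out of order $\ln(1+t)/(1+t)$ (see \eqref{FEP9quad}), not $o(1/(1+t))$ as you claim. These degradations happen to be harmless, but only because the final ODE comparison tolerates polylogarithmic losses---which is exactly why the paper re-derives that comparison explicitly (the Lyapunov function $K_t=J_t+\ell_tI_t$ of \eqref{FEP25}--\eqref{FEP26}) rather than citing \cite{Miclo:96} as a black box for a system it no longer matches.

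The genuine gap is your treatment of the logarithmic Sobolev constant. The Holley--Stroock perturbation you invoke rests on the claim that $\b_t\|U_{2s_t}-U\|_\infty$ stays bounded, i.e.\ $\|U_{2s_t}-U\|_\infty=O(s_t)$. For a merely Lipschitz $U$ the heat-semigroup smoothing rate is only $O(\sqrt{s_t})$ (via $|U_{2s}-U|\le pD^{p-1}W_1(\nu_{2s},\nu)$), so $\b_t\|U_{2s_t}-U\|_\infty\sim\sqrt{\ln(1+t)}\to\infty$ and the perturbation factor is \emph{not} bounded; your "at least of order $1/\ln(1+t)$" is unsupported. (The step could be salvaged by observing that the factor $e^{C\b_t\|U_{2s_t}-U\|_\infty}$ is subpolynomial in $t$ and can be absorbed into the exponent, but you would have to say so and rework the constants.) The paper avoids the issue entirely: it applies the log-Sobolev inequality to $\mu_{\b_t,s_t}$ with the elevation $c(U_{2s_t})$ of the \emph{smoothed} potential in \eqref{FEP11}, and then uses only the qualitative fact that $U_{2s}\to U$ uniformly, hence $\limsup_t c(U_{2s_t})\le c(U)<k$, to secure an exponent $c(U_{2s_t})/k\le 1-\e$ for large $t$ in \eqref{FEP33}. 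No rate of convergence of $U_{2s_t}$ to $U$ is needed; that is the idea your proposal is missing.
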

 
 \begin{proof}
 Let us compute as before
 \begin{equation}
 \label{FEP8}
 \f{dJ_t}{dt}=-\int_M\partial_t\ln(\mu_{\b_t,s_t}(\theta))m_t(\theta))\,d\theta+\int_M L_t\left[\ln\left(\f{m_t(\theta)}{\mu_{\b_t}(\theta)}\right)\right]m_t(\theta)\,d\theta.
 \end{equation}
 For the first term in the right we have using~\eqref{FEP6}
 \begin{equation}
 \label{FEP9}
 \begin{split}
 &\partial_t\ln(\mu_{\b_t,s_t}(\theta))\\&=-2\b_t'U_{2s_t}-2\b_t\int_{M\times M}2s_t'\partial_s\ln p(2s_t,\theta,z)p(2s_t,\theta,z)\rho^p(z,y)\,\nu(dy)dz\\
 &+2\b_t'\int_M U_{2s_t}(\theta')\mu_{\b_t,s_t}(\theta')\,d\theta'\\&
 +2\b_t\int_M\left(\int_{M\times M}2s_t'\partial_s\ln p(2s_t, \theta',z)p(2s_t,\theta',z)\rho^p(z,y)\,dz\nu(dy)\right)\mu_{\b_t,s_t}(\theta')\,d\theta'.
 \end{split}
 \end{equation}
 It is known that there exists $C_0>0$ such that $\forall s\in(0,1]$
 \begin{equation}
 \label{FEP9bis}
 |\partial_s\ln p(s,\theta, z)|\le \f{C_0}{s^{2}},
 \end{equation}
 see e.g. \cite{Hsu:99} and~\cite{Sheu:91} where  bounds of the type $\di |\n_\theta\ln p(s,\theta,z)|\le \f{C_1}{s}$ and  $\di |\n^2_\theta\ln p(s,\theta,z)|\le \f{C_2}{s^2}$ are given. Here we use  $$|\partial_s\ln p(s,\theta,z)|=\f12\left|\f{\Delta_\theta p(s,\theta,z)}{p(s,\theta,z)}\right|\le \f{{\rm dim}M}{2}\left(|\n_\theta^2\ln p(s,\theta,z)|+|\n_\theta \ln p(s,\theta,z)|^2\right). $$
 So~\eqref{FEP9} and~\eqref{FEP9bis} yield
 \begin{equation}
 \label{FEP9ter1}
 \begin{split}
 |\partial_t\ln(\mu_{\b_t,s_t}(\theta))|\le D^p\left(4\b_t'+\f{C_0\b_t|s_t'|}{s_t^{2}}\right).
 \end{split}
 \end{equation}
 which implies 
 \begin{equation}
 \label{FEP9ter}
 \begin{split}
 |\partial_t\ln(\mu_{\b_t,s_t}(\theta))|\le C\left(\b_t'+\f{\b_t|s_t'|}{s_t^{2}}\right).
 \end{split}
 \end{equation}
 with
 \begin{equation}
 \label{FEP9ter2}
 \begin{split}
 C=D^p(4+C_0).
 \end{split}
 \end{equation}
 Evaluating with~\eqref{FEP0} and integrating on $M$ we get 
 \begin{equation}
 \label{FEP9quad}
 \left|-\int_M\partial_t\ln(\mu_{\b_t,s_t}(\theta))m_t(\theta)\,d\theta\right|\le \f{C}{(1+t)k}\left(1+\ln(1+t)\right).
 \end{equation}
 Now we split the second term in the right of~\eqref{FEP8} into 
 \begin{equation}
 \label{FEP10}
 \begin{split} &\int_ML_t\left[\ln\left(\f{m_t(\theta)}{\mu_{\b_t,s_t}(\theta)}\right)\right]m_t(\theta,y)\,d\theta dy\\
 &=\int_ML_t^{0}\left[\ln\left(\f{m_t(\theta)}{\mu_{\b_t,s_t}(\theta)}\right)\right]m_t(\theta)\,d\theta
 +\int_MR_t(\theta,y)\left[\ln\left(\f{m_t(\theta)}{\mu_{\b_t,s_t}(\theta)}\right)\right]m_t(\theta,y)\,d\theta dy.
 \end{split}
 \end{equation}
 We have as for~\eqref{SA11}
 \begin{equation}
 \label{FEP11}
 \begin{split} \int_ML_t^{0}\left[\ln\left(\f{m_t(\theta)}{\mu_{\b_t,s_t}(\theta)}\right)\right]m_t(\theta)\,d\theta&=-2\int_M\left\|\n\sqrt{\f{m_t(\theta)}{\mu_{\b_t,s_t}(\theta)}}\right\|^2\mu_{\b_t,s_t}(\theta)\,d\theta\\
 &\le - 2c_2(\b_t\vee 1)^{-p}\exp\left(-c(U_{2s_t})\b_t\right)J_t
 \end{split}
 \end{equation}
 for some $c_2>0$ and integer $p>0$
 by logarithmic Sobolev inequality (\cite{Miclo:92}).
 
 The computation for the second term is similar to the one after~\eqref{SA11} and we get
 \begin{align*} &\int_MR_t(\theta,y)\left[\ln\left(\f{m_t(\theta)}{\mu_{\b_t,s_t}(\theta)}\right)\right]m_t(\theta,y)\,d\theta dy\\
 &=2\b_t\int_M\sqrt{\f{\mu_{\b_t,s_t}}{m_t}(\theta)}\left\langle d\sqrt{\f{m_t}{\mu_{\b_t,s_t}}(\theta)},R_t(\theta)\right\rangle m_t(\theta)\,d\theta
 \end{align*}
 with 
 $$
 R_t(\theta)=-\int_M\grad_{\theta}\kappa_{s_t}(\cdot, y)(m_t(y|\theta)-\nu_{s_t}(y))\,dy,
 $$
 and again
 \begin{align*} &\int_MR_t(\theta,y)\left[\ln\left(\f{m_t(\theta)}{\mu_{\b_t,s_t}(\theta)}\right)\right]m_t(\theta)\,d\theta\\
 &\le \b_t^2\int_M\|R_t(\theta)\|^2m_t(\theta)\,d\theta+\int_M\left\|\n\sqrt{\f{m_t}{\mu_{\b_t,s_t}}(\theta)}\right\|^2\mu_{\b_t,s_t}(\theta)\,d\theta.
 \end{align*}
 Replacing $2c_2$ by $c_2$ in~\eqref{FEP11} we can after summing get rid of the second term in the right. 
 
 Here again
 \begin{align*} \int_M\|R_t(\theta)\|^2m_t(\theta)\,d\theta&\le 32K^2I_t
 \end{align*}
 where we have defined 
 \begin{equation}
 \label{FEP12}
 I_t=\int_{M\times M}\ln\left(\f{m_t(y|\theta)}{\nu_{s_t}(y)}\right)m_t(y,\theta)\,dy.
 \end{equation}
 At this stage we proved that 
 \begin{equation}
 \label{FEP13}
 \begin{split}
 \f{dJ_t}{dt}&\le \f{C}{(1+t)k}\left(1+\ln(1+t)\right)\\&-c_2(\b_t\vee 1)^{-p}\exp\left(-c(U_{2s_t})\b_t\right)J_t+\b_t^2 32K^2I_t.
 \end{split}
 \end{equation}
 
 The next step is to find a suitable bound for $\f{dI_t}{dt}$. As before 
 \begin{equation}
 \label{FEP14}
 \begin{split}
 \f{dI_t}{dt}&=-\int_M\partial_t\ln(\nu_{s_t}(y))m_t(\theta,y)\,d\theta dy\\&+\int_{M\times M}L_t\left[\ln\left(\f{m_t(y|\theta)}{\nu_{s_t}(y)}\right)\right]m_t(y,\theta)\,d\theta dy
 \end{split}
 \end{equation}
 and 
 \begin{equation}
 \label{FEP14ter}
 \begin{split}
 &\left|\int_M\partial_t\ln(\nu_{s_t}(y))m_t(\theta,y)\,d\theta dy\right|\\
 &=\left|s_t'\int_M\partial_s\ln p(s_t,y,z)p(s_t,y,z)\,\nu(dz)\right|\\
 &\le \f{|s_t'|C_0}{s_t^{2}}=\f{C_0}{1+t}.
 \end{split}
 \end{equation}
  Now
 \begin{equation}
 \label{FEP14bis}
 \begin{split}
 &\int_{M\times M}L_t\left[\ln\left(\f{m_t(y|\theta)}{\nu_{s_t}(y)}\right)\right]m_t(y,\theta)\,d\theta dy\\&
 =\int_{M\times M}(L_{2,t}+L_{1,t})\left[\ln\left(\f{m_t(y|\theta)}{\nu_{s_t}(y)}\right)\right]m_t(y,\theta)\,d\theta dy.
 \end{split}
 \end{equation}
 We begin with the first term:
 \begin{align*}
 &\int_{M\times M}L_{2,t}\left[\ln\left(\f{m_t(y|\theta)}{\nu_{s_t}(y)}\right)\right]m_t(y,\theta)\,d\theta dy\\
 &=\g_t^{-1}\int_{M\times M}\ln\left(\f{m_t(y|\theta)}{\nu_{s_t}(y)}\right)\left(\nu_{s_t}(y)-m_t(y|\theta)\right)m_t(\theta)\,d\theta
 \end{align*}
 and estimate it as for~\eqref{SA15}:
\begin{equation}
\label{FEP15}
\int_{M\times M}L_{2,t}\left[\ln\left(\f{m_t(y|\theta)}{\nu_{s_t}(y)}\right)\right]m_t(y,\theta)\,d\theta dy\le -\g_t^{-1}I_t.
\end{equation}
For the second term in the right of~\eqref{FEP14} we need to introduce the density $f_t(y)$ of $Y_t^{s_t}$. Since 
$$
\ln\left(\f{m_t(y|\theta)}{\nu_{s_t}(y)}\right)=\ln\left(\f{m_t(\theta|y)}{m_t(\theta)}\right)+ \ln\left(\f{f_t(y)}{\nu_{s_t}(y)}\right)
$$
it rewrites as
\begin{equation}
\label{FEP16}
\begin{split}
&\int_{M\times M}L_{1,t}\left[\ln\left(\f{m_t(\theta|y)}{m_t(\theta)}\right)\right]
m_t(y,\theta)\,d\theta dy\\
&=\int_{M\times M}L_{1,t}\left[\ln (m_t(\theta|y))-\ln (m_t(\theta))\right]
m_t(y,\theta)\,d\theta dy.
\end{split}
\end{equation}
Similarly to~\eqref{SA17}
\begin{equation}
\label{FEP17}
\begin{split}
&\int_{M\times M}L_{1,t}\ln (m_t(\theta|y))
m_t(y,\theta)\,d\theta dy\\
&=\f12 \int_{M\times M}\D\ln (m_t(\theta|y))-\b_t\int_M\left\langle d_\theta \ln m_t(\cdot|y),\grad_{\theta}\kappa_{s_t}(\cdot, y)\right\rangle m_t(y,\theta)\,d\theta dy\\
&=-2\int_{M\times M}\left\|\n\sqrt{m_t(\theta|y)}\right\|^2\,d\theta f_t(y)dy\\&-\b_t\int_{M\times M}\left\langle d_\theta m_t(\cdot|y),\grad_{\theta}\kappa_{s_t}(\cdot, y)\right\rangle 
f_t(y)\,d\theta dy.
\end{split}
\end{equation}
For the absolute value of the last term:
\begin{equation}
\label{FEP18}
\begin{split}
&\left|-\b_t\int_{M\times M}\left\langle d_\theta m_t(\cdot|y),\grad_{\theta}\kappa(\cdot, y)\right\rangle \nu(y)\,
f_t(y)\,d\theta dy\right|\\&\le\f12\b_t^2K^2+2\int_{M\times M}\left\|\n \sqrt{m_t(\theta|y)}\right\|^2f_t(y) \,d\theta dy.
\end{split}
\end{equation}
We get as in~\eqref{SA19}
\begin{equation}
\label{FEP19}
\int_{M\times M}L_{1,t}\ln (m_t(\theta|y))
m_t(y,\theta)\,d\theta dy\le \f12\b_t^2K^2
\end{equation}
Then we bound the  last term in~\eqref{FEP16}: 
\begin{equation}
\label{FEP20}
\begin{split}
-L_{1,t} \ln(m_t(\theta))=-L_{1,t}\ln\left(\f{m_t(\theta)}{\mu_{\b_t,s_t}(\theta)}\right)-L_{1,t}\ln(\mu_{\b_t,s_t}(\theta)).
\end{split}
\end{equation}
We already know by~\eqref{FEP8} and~\eqref{FEP9quad} that 
\begin{equation}
\label{FEP21}
-\int_ML_{1,t}\ln\left(\f{m_t(\theta)}{\mu_{\b_t,s_t}(\theta)}\right)m_t(\theta)\,d\theta\le -\f{dJ_t}{dt}+\f{C}{(1+t)k}\left(1+\ln(1+t)\right).
\end{equation}

For the second term we have 
\begin{equation}
\label{FEP22}
\begin{split}
L_{1,t}\ln(\mu_{\b_t,s_t}(\theta))&=-2\b_tL_{1,t}U_{2s_t}(\theta)\\
&=-\b_t\D U_{2s_t}(\theta)+2\b_t^2\left\langle dU_{2s_t}, \grad_{\theta}\kappa_{s_t}(\cdot, y)\right\rangle\\
&\le K'(\b_t\vee 1)\b_ts_t^{-2}
\end{split}
\end{equation}
for some $K'>0$, where we used
$$
\D U_{2s}=\int_M\left(\D_\theta\ln p(2s,\theta,y)+\|\n_\theta\ln p(2s,\theta,y)\|^2 \right)p(2s,\theta,y)\rho^p(y,z)\,\nu(dz)
$$ and standard bound for the first and second derivatives of the heat kernel (\cite{Hsu:99} and \cite{Sheu:91}). 

Finally we obtain 
\begin{equation}
\label{FEP23}
\begin{split}
\f{dI_t}{dt}\le \f{C_0}{(1+t)}+\f{K'}{k^2}(\ln(1+t)\vee k)(\ln(1+t))^3-\f{dJ_t}{dt}-(1+t)I_t
\end{split}
\end{equation}
together with~\eqref{FEP13}:
\begin{equation}
\label{FEP13bis}
 \f{dJ_t}{dt}\le \f{C}{(1+t)k}\left(1+\ln(1+t)\right)-c_2(\b_t\vee 1)^{-p}\exp\left(-c(U_{2s_t})\b_t\right)J_t+2\b_t^232K^2I_t
 \end{equation}
 which rewrites as 
 \begin{equation}
\label{FEP13ter}
\begin{split}
\f{dI_t}{dt}&\le k_1(\ln(1+t))^{4}-\f{dJ_t}{dt}-(1+t)I_t
\end{split}
\end{equation}
and
\begin{equation}
\label{FEP13quad}
 \f{dJ_t}{dt}\le c_1\left(\f{\ln(1+t)}{1+t}+(\ln(1+t))^{2}I_t\right)-c_2(\ln(1+t))^{-p}(1+t)^{-\f{c(U_{2s_t})}{k}}J_t
 \end{equation}
 for some constants $c_1, k_1>0$, as soon as $t\ge 2$.
 At this stage we can use a similar computation to the end of the proof of theorem 1 in \cite{Miclo:96} to obtain that under assumptions~\eqref{FEP0} and~\eqref{SA0ter} then 
 \begin{equation}
 \label{FEP24}
 \lim_{t\to\infty} J_t=0.
 \end{equation}
 However we will do the calculation for completeness, and because there are some small differences.
 Recall $U_s\to U$ uniformly as $s\to 0$. Moreover $2s_t\to 0$ as $t\to\infty$, so we get
 $$
\limsup_{t\to\infty}c(U_{2s_t})\le c(U).
 $$
 As a consequence, for $t$ sufficiently large we have 
 \begin{equation}
 \label{FEP33}
 \f{c(U_{2s_t})}{k}\le 1-\e
 \end{equation}
 for some $\e>0$.
 Let 
 \begin{equation}
 \label{FEP25}
 \ell_t=\f{c_1(\ln(1+t))^2}{1+t+c_1(\ln(1+t))^2-c_2(\ln(1+t))^{-p}(1+t)^{-(1-\e)}}
 \end{equation}
 where $\e>0$ is defined in~\eqref{FEP33}.
 It is easily checked that for $t$ sufficiently large  $\ell_t$ is positive and decreasing, and that it converges to $0$ as $t\to\infty$.
 Define
 \begin{equation}
 \label{FEP26}
 K_t=J_t+\ell_t I_t.
 \end{equation}
 We will prove that $K_t\to 0$ as $t\to \infty$ and from this we will get~\eqref{FEP24}. 
 
 for $t$ sufficiently large, 
 \begin{equation}
 \label{FEP27}
 \f{dK_t}{dt}\le \f{dJ_t}{dt}+\ell_t\f{dI_t}{dt}
 \end{equation}
 and this yields with~\eqref{FEP13ter} and~\eqref{FEP13quad} 
 \begin{align*}
 \f{dK_t}{dt}&\le (1-\ell_t)c_1\f{\ln(1+t)}{1+t}+c_1(\ln(1+t))^{2}I_t\\
 &-\ell_tc_1(\ln(1+t))^{2}I_t-(1-\ell_t)c_2(\ln(1+t))^{-p}(1+t)^{-\f{c(U_{2s_t})}{k}}J_t\\
 &+\ell_tk_1(\ln(1+t))^{4}-(1+t)\ell_tI_t.
 \end{align*}
 
 Replacing $c_1(\ln(1+t))^{2}$ at the end of the first line by
 $$
 \ell_t\left(1+t+c_1(\ln(1+t))^2-c_2(\ln(1+t))^{-p}(1+t)^{-(1-\e)}\right)
 $$
 by the help of~\eqref{FEP25} we obtain
 \begin{align*}
 \f{dK_t}{dt}&\le c_1\f{\ln(1+t)}{1+t}-c_2\ell_t(\ln(1+t))^{-p}(1+t)^{-\f{c(U_{2s_t})}{k}}I_t\\
 & -(1-\ell_t)c_2(\ln(1+t))^{-p}(1+t)^{-\f{c(U_{2s_t})}{k}}J_t+\ell_tk_1(\ln(1+t))^{4}
 \end{align*}
 and this yields
 using 
$\di
-(1+t)^{-\f{c(U_{2s_t})}{k}}\le -(1+t)^{-(1-\e)}
$:
 \begin{equation}
 \label{FEP28}
 \f{dK_t}{dt}\le A_t-B_tK_t
 \end{equation}
 with 
 \begin{equation}
 \label{FEP29}
 A_t=c_1\f{\ln(1+t)}{1+t}+\ell_tk_1\ln(1+t))^{4}
 \end{equation}
 and 
 \begin{equation}
 \label{FEP30}
 B_t=(1-\ell_t)c_2(\ln(1+t))^{-p}(1+t)^{-(1-\e)}.
 \end{equation}
 A sufficient condition for $K_t$ to converge to $0$ as $t\to\infty$ is 
 \begin{equation}
 \label{FEP31}
 \int_\cdot^\infty B_t\,dt=+\infty
 \end{equation}
 and
 \begin{equation}
 \label{FEP32}
 \lim_{t\to\infty}\f{A_t}{B_t}=0.
 \end{equation}
  Condition~\eqref{FEP31} clearly is realized. As for condition~\eqref{FEP32} we easily see that
 $$
 \f{c_1\f{\ln(1+t)}{1+t}}{(1-\ell_t)c_2(\ln(1+t))^{-p}(1+t)^{-(1-\e)}}\to 0
 $$
 and also
 $$ \f{\ell_tk_1(\ln(1+t))^{4}}{(1-\ell_t)c_2(\ln(1+t))^{-p}(1+t)^{-(1-\e)}}\to 0
 $$
 from the fact that 
 $$
 \ell_t\le \f{c(\ln(1+t))^{2}}{1+t}
 $$
 for some $c>0$.
\end{proof}
%
%

\providecommand{\bysame}{\leavevmode\hbox to3em{\hrulefill}\thinspace}

\end{document}